\newtheorem{proposition}{Proposition}
\newtheorem{theorem}{Theorem}
\newtheorem{definition}{Definition}
\theoremstyle{definition}
\DeclareMathOperator{\diver}{div}
\DeclareMathOperator{\curl}{curl}
\DeclareMathOperator{\Ric}{Ric}
\DeclareMathOperator{\End}{End}
\newcommand{\ad}{\mathrm{ad}}
\newcommand{\adstar}{\ad^{\star}}
\newcommand{\id}{\mathrm{id}}
\newcommand{\Diff}{\mathrm{Diff}}
\newcommand{\Diffmu}{\Diff_{\mu}}
\newcommand{\sgrad}{\nabla^{\perp}}
\newcommand{\sgradbar}{\tilde\nabla^{\perp}}
\newcommand{\Laplacian}{\Delta}
\newcommand{\adc}{\mathbf{\mathcal{C}}}
\newcommand{\dbar}{\mathbf{\mathcal{D}}}
\newcommand{\hbarN}{\hbar}
\newcommand{\sigmamat}{B}
\newcommand{\arxivonly}[1]{#1}
\newcommand{\journalonly}[1]{}
\newcommand{\new}[1]{#1}
\newcommand{\old}[1]{}
\title{Zeitlin's model for axisymmetric 3-D Euler equations}
\author[1]{Klas Modin}
\author[2]{Stephen C. Preston}
\affil[1]{Mathematical Sciences, Chalmers and University of Gothenburg, Gothenburg, SE-412~96, Sweden \journalonly{${}^1$corresponding author, \texttt{klas.modin@chalmers.se}}}
\affil[2]{Department of Mathematics, CUNY Brooklyn College, New York, NY 10468 
and CUNY Graduate Center, New York, NY 10016, USA}
\date{}
\begin{document}

\maketitle 

\begin{center}
   \vspace{-3ex}
   \it To the memory of Vladimir Zeitlin.
\end{center}

\begin{abstract}
Zeitlin's model is a spatial discretization for the 2-D Euler equations on the flat 2-torus or the 2-sphere. Contrary to other discretizations, it preserves the underlying geometric structure, namely that the Euler equations describe Riemannian geodesics on a Lie group. Here we show how to extend Zeitlin's approach to the axisymmetric Euler equations on the 3-sphere. It is the first discretization of the 3-D Euler equations that fully preserves the geometric structure\new{, albeit restricted to axisymmetric solutions}. Thus, this finite-dimensional model admits Riemannian curvature and Jacobi equations, which are discussed. 
\\[1ex]
\textbf{Keywords:} axisymmetric Euler equations, Zeitlin's model, sectional curvature, Euler-Arnold equations, Abelian extension
\\[1ex]
\textbf{MSC2020:} 35Q31, 53D50, 76M60, 76B47, 53D25
\end{abstract}

\arxivonly{
\tableofcontents
}

\section{Introduction}

Euler's~\cite{Eu1757} equations for an ideal fluid are the second-oldest 
partial differential equations ever written down.\footnote{Only the wave equation is older.}
They are widely studied, but many of their aspects remain abstruse.
It is therefore important to find finite-dimensional models that preserve as much of their structure as possible, both 
for theoretical purposes and for numerical simulation.
In particular, the equations describe geodesics in the group of volume preserving diffeomorphisms of a domain under a right-invariant metric corresponding to kinetic energy (as described by Arnold~\cite{Ar1966}).
A candidate for modeling this structure is a finite-dimensional Lie group with a right-invariant Riemannian metric.
For the 2-D Euler equations, an effective family of such models was given by Zeitlin, first on the flat torus~\cite{Ze1991} and then on the sphere~\cite{Ze2004}.
The latter uses spherical harmonics and their relation to representation theory for $SO(3)$, such that the approximating groups are the special unitary groups $SU(n)$ for positive integers~$n$.
In consequence, these models respect the $SO(3)$ symmetry of the sphere, which implies better convergence and less Gibbs phenomena than the corresponding torus models (which fail to preserve the translational symmetry).
Zeitlin's model has been exploited to study the long-time behavior of spherical solutions, by the first author and others (see, e.g., \cite{MoVi2020,FrCaCiGe2024} and references therein).

In this note we show how to extend Zeitlin's model to\old{the three-dimensional (3-D) case} \new{
axisymmetric solutions of the three-dimensional (3-D) Euler equations on the $3$-sphere.} 
Indeed, on the $3$-sphere, the Hopf vector field generates a family of isometries, and its flow lines are all circles of the same length. 
The quotient by this flow is the well-known Hopf fibration onto the $2$-sphere. Solutions of the 3-D Euler equation that commute with this Hopf flow are called axisymmetric by analogy with the rotation field in $3$-space, and the 3-D axisymmetric Euler equation reduces to a pair of equations on the $2$-sphere~\cite{LiMiPr2022}. 
These equations can be approximated by the Zeitlin model in the same way as in the two-dimensional case, and we end up with a model for axisymmetric 3-D Euler equations on $3$-spheres in terms of a finite dimensional space $\mathfrak{su}(n)\times\mathfrak{u}(n)$ equipped with a twisted Lie algebra product. 
We will describe this Lie algebra structure and some aspects of its geometry, along with results of 3-D numerical simulations obtained by the same techniques as in the 2-D case. 

\new{
From a physics standpoint, Euler's equations on the 3-sphere can be seen as a model for spin orientation waves, via the identification $S^3\simeq \mathrm{SU}(2)$.
However, the key aspect of the axisymmetric 3-D Euler equations emanate from analysis.
Indeed, whereas the 2-D equations are globally well-posed, the 3-D Euler equations are likely not, and the obstacle in the analysis appears precisely in the restriction to axisymmetric solutions with swirl.} 
\new{In particular, the mechanism for blow-up discovered by Luo and Hou~\cite{LuHo2014} and developed by Elgindi~\cite{El2021} builds on axisymmetric solutions, as does the more recent result by Chen and Hou~\cite{ChHo2023}.}

\new{The extended Zeitlin model developed here thus provides a geometrically consistent approach for numerical studies of the qualitative differences in dynamical behavior between Euler's equations in 2-D and 3-D.
A first indication is given in section~\ref{sec:numerics} below, where two numerical experiments suggests a generic faster-than-exponential growth of vorticity, which cannot occur in 2-D. This conforms with the general expectation that axisymmetric ideal fluid flow with nonzero swirl already captures all the worst possible behavior of the equations in the fully general case. The fact that breakdown mechanisms appear to be a \emph{local} phenomenon indicates that the distinction between $S^3$ and e.g., $\mathbb{T}^3$ is not important at this level, and working on $S^3$ leads to simpler formulas.}

\subsection*{Acknowledgements}

We would like to thank Cornelia Vizman for useful discussions vital to our results \new{and Michael Roop for pointing out the general form of the Casimir functions}.
\new{We also acknowledge the anonymous reviewers for helpful suggestions.}
The work was supported by the Swedish Research Council (grant number 2022-03453) and the Knut and Alice Wallenberg Foundation (grant number WAF2019.0201).
The computations were enabled by resources provided by Chalmers e-Commons at Chalmers.

\section{Background}

\subsection{General aspects}
For the material in this portion, we refer to the monograph by Arnold and Khesin~\cite{ArKh1998}. 
Let $M$ be a compact simply connected Riemannian manifold without boundary, either $S^2$ or $S^3$ with the usual round metric
of constant curvature $1$.  
Euler's equations for the velocity field $u(t,x)$ of an ideal fluid on $M$ take the form 
$$ \dot u + \nabla_uu = -\nabla p, \qquad \diver{u}=0, $$
where $\dot u$ denotes derivative with respect to time and the pressure $p$ is determined implicitly by the volume preserving constraint via $\Delta p = -\diver{(\nabla_uu)}$. 
Eliminating the pressure by taking the curl gives two versions of the equation for the vorticity $\omega=\curl{u}$, depending on the dimension:
\begin{align} 
\dot\omega + u\cdot \nabla \omega &= 0, \qquad \omega=\curl{u}, \text{ a function in two dimensions;} \label{2Dvorticityvelocity} \\
\dot\omega + [u,\omega] &= 0, \qquad \omega=\curl{u}, \text{ a vector field in three dimensions.} \label{3Dvorticityvelocity}
\end{align}

Since $M$ is simply connected, the curl $\omega$ completely determines the divergence-free field $u$ 
via a Biot-Savart operator. In two dimensions we can write $u=\sgrad \psi$ where $\psi$ is a stream function 
and $u$ is defined in terms of the area $2$-form $\mu$ by the condition that $\iota_u\alpha \, \mu = -\alpha \wedge d\psi$ 
for every $1$-form $\alpha$ on $M$; with this convention\footnote{Many authors choose the opposite convention for the 
stream function, which will flip the sign in all equations but otherwise does not matter.} the vorticity becomes $\omega = \Laplacian \psi$
and we have $u\cdot \nabla\omega = \{\psi, \omega\}$ in terms of the Poisson bracket defined by $d\psi\wedge d\omega = \{\psi,\omega\} \mu$,
so the 2-D Euler equation becomes 
\begin{equation}\label{2Dvorticitystream} 
\dot\omega + \{\psi, \omega\} = 0, \qquad \Delta \psi=\omega.
\end{equation}

The flow of the time-dependent velocity field is denoted by $\gamma$, satisfying 
$$ \dot\gamma(t,x) = u\big(t,\gamma(t,x)\big), \qquad \gamma(0,x)=x,$$
and the volume preserving condition $\det{D_x\gamma}\equiv 1$. 
The group of such volume preserving diffeomorphisms is denoted $\Diffmu(M)$. 
In terms of the flow $\gamma$, the vorticity equations \eqref{2Dvorticityvelocity}--\eqref{3Dvorticityvelocity} 
can be solved to give the vorticity transport laws 
$$ \omega(t,\gamma(t,x)) = \omega_0(x) \quad \text{(2-D),} \qquad \omega(t,\gamma(t,x)) = D_x\gamma(t,x)\omega_0(x) \quad \text{(3-D)}.$$
These correspond to the left action of $\gamma(t)\in\Diffmu(M)$ on the initial vorticity configuration~$\omega_0$.

If $G$ is a group (finite- or infinite-dimensional) with a right-invariant metric $\langle \cdot, \cdot\rangle$, then the equation for a geodesic $\gamma(t)\in G$ starting at the identity can be written as the coupled system 
\begin{equation}\label{eulerarnold}
\dot\gamma(t) = u(t) \gamma(t), \qquad \dot u(t) + \adstar_{u(t)}u(t) = 0, \qquad \gamma(0)=\id, \quad u(0)=u_0\in \mathfrak{g},
\end{equation}
where $\adstar$ is the operator defined by 
\begin{equation}\label{adstardef}
\langle \adstar_uv,w\rangle = \langle v, \ad_uw\rangle \qquad \forall u,v,w\in \mathfrak{g}.
\end{equation}
The equation for $\gamma$ is called the flow equation, while the equation for $u$ is called the Euler-Arnold equation. 
The Euler equations correspond to $G=\Diffmu(M)$, with $\mathfrak{g}$ given by the divergence-free vector fields, and the right-invariant metric given by the $L^2$ kinetic energy 
$$ \langle u,v\rangle  = \int_M g(u,v) \, \mu_g.$$

The curvature tensor is given for vectors $u$ and $v$ by the formula
\begin{equation}\label{curvature}
\langle R(u,v)v,u\rangle = 
\tfrac{1}{4} \lvert \adstar_uv + \adstar_vu + \ad_uv\rvert^2 
- \langle \adstar_uv+\ad_uv, \ad_uv\rangle - \langle \adstar_uu, \adstar_vv\rangle,
\end{equation}
which comes from completing the square in the Arnold formula. 

The Jacobi equation is the linearization of the Euler-Arnold equation \eqref{eulerarnold}, and splits in the same way: a Jacobi field $J(t)=y(t)\gamma(t)$ along a geodesic $\gamma$ satisfies the equation 
\begin{equation}\label{jacobisplit}
\dot y(t) - \ad_{u(t)}y(t) = z(t), \qquad \dot z(t) + \adstar_{u(t)}z(t) + \adstar_{z(t)}u(t) = 0.
\end{equation}
Conjugate points along geodesics occur when there is a solution of this equation with $y(0)=0$ and $y(T)=0$ for some $T>0$. 
See \cite{KhLeMiPr2011b} for a survey of results about curvatures and conjugate points on $\Diffmu(M)$.

\subsection{Zeitlin's model on the 2-sphere}\label{zeitlinsection}

Zeitlin's model originates from quantization theory developed by Hoppe~\cite{Ho1989}.
The idea is to replace the Poisson algebra of smooth functions on a symplectic manifold $M$ with a Lie algebra of skew-Hermitian operators in such a way that (i) the operator eigenvalues correspond to the function values and (ii) the operator commutator corresponds to the Poisson bracket.
If the manifold $M$ is compact and quantizable (cf.~\cite{ChPo2018}), the operators can be taken as $\mathfrak{u}(n)$ matrices in such that the classical limit $\hbar\to 0$ corresponds to $n\to\infty$.
The final ingredient is a quantum version $\Delta_n\colon\mathfrak{u}(n)\to\mathfrak{u}(n)$ of the Laplacian $\Delta\colon C^\infty(M)\to C^\infty(M)$. 
Zeitlin's model is then given by
\begin{equation}\label{zeitlinbasic}
   \dot W + \frac{1}{\hbar}[P, W]=0, \qquad \Delta_n P = W ,
\end{equation}
which yields a spatial discretization of the vorticity equation~\eqref{2Dvorticitystream}.

Hoppe and Yau~\cite{HoYa1998} constructed quantization for $M=S^2$ from representation theory for~$\mathfrak{so}(3)$.
Indeed, for integer $n$, let $s=\frac{n-1}{2}$ (the ``spin'' number). 
Then construct three matrices $S_1,S_2,S_3\in \mathfrak{u}(n)$, with indices labeled from $-s$ to $s$ (instead of $1$ to $n$), such that 
\begin{itemize}
\item $S_1$ is purely imaginary and symmetric, whose only nonzero entries above the diagonal are $a_{j,j+1} = \tfrac{i}{2}\sqrt{s(s+1)-j(j+1)}$;
\item $S_2$ is purely real and antisymmetric, whose only nonzero entries above the diagonal are are  $b_{j,j+1} = \tfrac{1}{2}\sqrt{s(s+1)-j(j+1)}$;
\item $S_3$ is purely imaginary and diagonal, with diagonal entries $c_{jj} = i j$.
\end{itemize}
For example, when $n=3$ we get $s=1$ and 
$$ 
S_1 = \frac{1}{\sqrt{2}} \left( \begin{matrix}
0 & i & 0 \\
i & 0 & i \\
0 & i & 0\end{matrix}\right)
\qquad 
S_2 = \frac{1}{\sqrt{2}} \left( \begin{matrix}
0 & 1 & 0 \\
-1 & 0 & 1 \\
0 & -1 & 0\end{matrix}\right), \qquad
S_3 = \left( \begin{matrix} 
-i & 0 & 0 \\
0 & 0 & 0 \\
0 & 0 & i \end{matrix}\right).$$
Meanwhile, for $n=4$ we get $s=\tfrac{3}{2}$ and 
$$ S_1 = \left( \begin{matrix}
0 & \tfrac{\sqrt{3}}{2} i & 0 & 0 \\
\tfrac{\sqrt{3}}{2} i & 0 & i & 0 \\
0 & i & 0 & \tfrac{\sqrt{3}}{2}i \\
0 & 0 & \tfrac{\sqrt{3}}{2} i & 0 \end{matrix}\right), 
\quad 
S_2 = \left( \begin{matrix}
0 & \tfrac{\sqrt{3}}{2}  & 0 & 0 \\
-\tfrac{\sqrt{3}}{2}  & 0 & 1 & 0 \\
0 & -1 & 0 & \tfrac{\sqrt{3}}{2} \\
0 & 0 & -\tfrac{\sqrt{3}}{2}  & 0 \end{matrix}\right), 
\quad 
S_3 = \left( \begin{matrix} 
-\tfrac{3i}{2} & 0 & 0 & 0 \\
0 & -\tfrac{i}{2} & 0 & 0 \\
0 & 0 & \tfrac{i}{2} & 0 \\
0 & 0 & 0 & \tfrac{3i}{2}\end{matrix}\right).
$$
The matrices $S_1,S_2,S_3$ provide an irreducible representation of the Lie algebra $\mathfrak{so}(3)$ on $\mathbb{C}^n$, so they fulfil the commutation relations 
$$ [S_1,S_2]=S_3, \qquad [S_2,S_3] = S_1, \qquad [S_3,S_1]=S_2 .$$
In turn, they induce a representation on $\mathfrak{u}(n)$ via $\ad_{S_1},\ad_{S_2},\ad_{S_3}$, which, via the Peter--Weyl theorem, decomposes into odd-dimensional, irreducible $\mathfrak{so}(3)$-representations
\begin{equation*}
   \mathfrak{u}(n) = V_0 \oplus V_1 \oplus \cdots \oplus V_{n-1}, \qquad \dim(V_{\ell}) = 2\ell+1.   
\end{equation*}
By decomposing each $V_\ell$ according to its weights $m=0,\ldots,\ell$ we then obtain a map between spherical harmonics $Y_{\ell}^m$ and a matrix basis $T_{\ell}^m \in V_\ell$, which yields the quantization as the representation morphism $\mathcal T_n\colon C^\infty(S^2)\to \mathfrak{u}(n)$.
The scaled matrices $X_\alpha = \hbar S_\alpha$ for $\hbar = 2/\sqrt{n^2-1}$ correspond to the Cartesian coordinate functions $x_\alpha \in S^2$, whereas the scaled commutator $\frac{1}{\hbar}[\cdot,\cdot]$ converges in $L^\infty$ to the Poisson bracket $\{\cdot,\cdot\}$ as $n\to\infty$ (cf.~Charles and Polterovich~\cite{ChPo2018}).
Furthermore, the Casimir element for the representation on $\mathfrak{u}(n)$ is the Hoppe--Yau Laplacian $\Delta_n\colon \mathfrak{u}(n)\to\mathfrak{u}(n)$ given by
\begin{equation}\label{hoppeyau}
   \Delta_n =\sum_{\alpha=1}^3 \ad_{S_\alpha}^2, \qquad \text{i.e.,}\qquad \Delta_n P = \sum_{\alpha=1}^3 [S_\alpha,[S_\alpha,P]]
\end{equation}
Since the quantization operator $\mathcal T_n$ is a representation morphism it intertwines the Casimir operators, i.e., $\mathcal T_n\circ \Delta = \Delta_n\circ \mathcal T_n$.
Consequently, the Hoppe--Yau Laplacian has the right spectrum $\Delta_n\big\vert_{V_{\ell}} = -\ell(\ell+1)\mathrm{id}$.
We refer to Modin and Viviani~\cite{MoVi2024} and references therein for more details on the $S^2$ quantization, its connection to representation theory, and the corresponding Euler--Zeitlin equation~\eqref{zeitlinbasic} on $\mathfrak{u}(n)$.

Contrary to all conventional discretizations, the Euler--Zeitlin equation~\eqref{zeitlinbasic} is itself an Euler--Arnold equation, for $G=SU(n)$, $\mathfrak{g}=\mathfrak{su}(n)$, and the right-invariant metric defined at the identity by 
\begin{equation}\label{zeitlin2dmetric}
   \langle W,P\rangle = \operatorname{tr}(W\Delta_n P).
\end{equation}
Hence, there is a notion of curvature and Jacobi fields, and these notions in the finite-dimensional case approximate the corresponding objects in the infinite-dimensional case~\cite{MoPe2024}. 

\subsection{Axisymmetry on the 3-sphere}\label{sub:axisymmetry_on_spheres}

The flow of a Killing vector field $K$ on $M$ generates isometries, whose action preserves solutions of the Euler equation.
Consequently, a solution which is initially symmetric will remain so for all time (see Lichtenfelz et al.~\cite{LiMiPr2022} for details). 
At the diffeomorphism group level, the symmetry corresponds to the flow $\gamma$ commuting with the flow of $K$, while at the vector field level, it corresponds to the vanishing commutator condition $[K,u]=0$. 
On $S^2$ every Killing field is a rotation around some axis, and the condition $[K,u]=0$ is very restrictive, implying that $u$ must be a steady solution of the Euler equation. 

But in three dimensions there is more flexibility, and there is a large family of ``axisymmetric'' nonsteady solutions~\cite{LiMiPr2022}.
The restriction to such solutions is effectively a two-dimensional fluid, but with an additional source coming from the ``swirl'' $\langle u, K\rangle$. 
Explicitly, on the $3$-sphere embedded in $\mathbb{R}^4$, we may choose a basis of vector fields\footnote{These are the right-invariant vector fields of the quaternion group, and the scaling by $\tfrac{1}{2}$ is a convenience to avoid other factors of $2$ later on, but neither of these things are important in the bigger picture.}
\begin{align*}
E_1 &= \tfrac{1}{2} (-x\,\partial_w + w\,\partial_x - z\,\partial_y + y\,\partial_z) \\
E_2 &= \tfrac{1}{2} (-y\,\partial_w + z\,\partial_x + w\,\partial_y - x\,\partial_z) \\
E_3 &= \tfrac{1}{2} (-z\,\partial_w - y\,\partial_x + x\,\partial_y + w\,\partial_z),
\end{align*}
and define a Riemannian metric on $S^3$ so that these are orthonormal (corresponding to working on a $3$-sphere of radius $2$). 
The field $E_1$ is the well-known Hopf field, and the flows of each $E_i$ are $4\pi$-periodic.  

A direct calculation, using the Riemannian curl and divergence, shows that 
\begin{equation}\label{S3brackets}
[E_1,E_2] = -E_3, \qquad [E_2,E_3]=-E_1, \qquad [E_3,E_1] = -E_2, \qquad \curl{E_i} = E_i \quad \forall i.
\end{equation}
Expressing $u=\sum_{i=1}^3 u_i E_i$, the divergence is given by 
$$ \diver{u} = \sum_{i=1}^3 E_i u_i,$$
where $E_i$ acts on functions as a differential operator.
If we take $K=E_1$, the conditions $[K,u]=0$ and $\diver{u}=0$ imply the existence of functions $\tilde{\sigma}$ and $\tilde{\psi}$ such that
\begin{equation}\label{axisymmetricexplicit} 
u = \tilde{\sigma} E_1 - (E_3 \tilde{\psi}) E_2 + (E_2 \tilde{\psi}) E_3, \qquad E_1 \tilde{\sigma}\equiv 0, \quad E_1 \tilde{\psi} \equiv 0.
\end{equation}
\new{Indeed, let $\pi\colon S^3 \to S^2$ denote the Hopf fibration (see \eqref{hopffibration} below). 
The condition $[K,u]=0$ then implies that $u$ descends, i.e., there is a vector field $v$ on $S^2$ such that $v\circ \pi = D\pi \circ u$.
Since $u$ is divergence free, $v$ must also be divergence free, which implies  $v = \nabla^\perp \psi$ for some streamfunction $\psi$ on $S^2$.
The form~\eqref{axisymmetricexplicit} now follows by taking $\tilde\psi = \psi\circ\pi$ and observing that $[E_1, \tilde\sigma E_1] = 0$ if and only if $E_1\tilde\sigma = 0$.
}

We then find that the curl is given by 
$$ \omega = \curl{u} = (\tilde{\sigma}+(E_2^2+E_3^2)\tilde{\psi}) E_1 + \big( E_3\tilde{\sigma}\big) E_2 \textcolor{red}{-} \big(E_2\tilde{\sigma}\big) E_3.$$
%
%
The vorticity form~\eqref{3Dvorticityvelocity} of the Euler equation on $S^3$ then becomes the system
\begin{equation}\label{primitive3Deuler}
\partial_t(E_2^2+E_3^2)\tilde{\psi} +  \mathcal{B}\big( \tilde{\psi}, \tilde{\sigma}+(E_2^2+E_3^2)\tilde{\psi}\big) = 0, \qquad
\partial_t \tilde{\sigma} + \mathcal{B}\big(\tilde{\psi}, \tilde{\sigma}\big) = 0
\end{equation}
where 
\begin{equation}\label{Bpoisson}
\mathcal{B}(\tilde f,\tilde g) := (E_2\tilde f)(E_3\tilde g)-(E_3\tilde g)(E_2\tilde f)
\end{equation}
descends to the Poisson bracket on $S^2$\old{, as we shall see below}.
\new{Indeed, if $\tilde f = f\circ\pi$ and $\tilde g = g\circ\pi$ then direct calculations yield $\{f,g\} \circ \pi = \mathcal B(\tilde f,\tilde g)$.}

The map  $\Pi\colon\mathbb{R}^4\to\mathbb{R}^3$ given by 
\begin{equation}\label{hopffibration}
\Pi(w,x,y,z) = \new{\big( 2(wy+xz), 2(wz-xy), w^2+x^2-y^2-z^2\big)}
\end{equation}
takes $S^3$ into $S^2$ and its restriction $\pi\colon S^3\to S^2$ is the Hopf fibration. 
We compute that $D\pi(E_1)\equiv 0$, so the flow circles of $K=E_1$ all map to points in the quotient $S^3/S^1 \simeq S^2$. 
Thus, the conditions from \eqref{axisymmetricexplicit} that $\tilde{\sigma}$ and $\tilde{\psi}$ be $K$-invariant are precisely what one needs to have  real-valued functions $\sigma$ and $\psi$ defined on $S^2$ and satisfying $\sigma\circ\pi = \tilde{\sigma}$ and $\psi\circ\pi = \tilde{\psi}$. 
The equations \eqref{primitive3Deuler} then also descend to equations on $S^2$, given by 
\begin{equation}\label{3Deuleraxi}
\Delta \dot\psi + \{ \psi, \Delta \psi + \sigma\} = 0, \qquad \dot\sigma + \{\psi, \sigma\} = 0,
\end{equation}
where our choices on $S^3$ lead to exactly the standard Laplacian $\Delta$ and the standard Poisson bracket $\{\cdot,\cdot\}$ on $S^2$. 
Comparing to \eqref{2Dvorticitystream}, we see that the 3-D axisymmetric equation reduces to the 2-D equation when the swirl $\sigma$ is zero. 
\arxivonly{See Appendix \ref{descendmetricappendix} for details of these computations in spherical coordinates.}


\section{The product structure and its discretization}

If $u$ and $v$ are axisymmetric vector fields on $S^3$ then $[u,v]$ is again axisymmetric. 
Indeed, from the Jacobi identity
\begin{equation*}
   [[u,v],K] = -[\underbrace{[K,u]}_{0},v] - [\underbrace{[v,K]}_0,u] = 0.
\end{equation*}
Thus, the space of axisymmetric vector fields makes a Lie sub-algebra.
Here we construct the corresponding Lie algebra structure in terms of the components $(\psi,\sigma)\in C^\infty(S^2)\times C^\infty(S^2)$, for which the axisymmetric 3-D Euler equation \eqref{3Deuleraxi} on $S^2$ is the Euler-Arnold equation.
Once this structure is established, it becomes evident how to discretize it via Zeitlin's approach.

The Lie algebra is modelled on the product $T_\id\Diffmu(S^2)\times C^\infty(S^2)$, but with a more complicated Lie algebra than the usual product structures, i.e., the direct product, the semidirect product, or the central extension.
Instead, it is a special case of the \emph{Abelian extension}, described in detail by Vizman~\cite{Vi2008}.

\begin{definition}\label{abelianextdef}
   Let $\mathfrak{g}$ be a Lie algebra with a $\mathfrak{g}$-module $\Sigma$ specified by an action map $\rho\colon \mathfrak{g}\to \End(\Sigma)$.
   An Abelian extension of $\mathfrak{g}$ by $\Sigma$ is determined by a bilinear skew-symmetric map $b\colon \mathfrak{g}\times \mathfrak{g}\to \Sigma$ which satisfies the $2$-cocycle condition 
   \begin{equation}\label{cocycle}
   \sum_{\text{cyclic}} b([v_1,v_2], v_3) = \sum_{\text{cyclic}} \rho(v_1)b(v_2,v_3), \qquad v_1,v_2,v_3\in\mathfrak{g},
   \end{equation}
   where the cyclic sum is taken as in the Jacobi identity for the three vectors. 
   The Lie bracket on $\mathfrak{g}\times \Sigma$ is then defined by 
   \begin{equation}\label{bracketvizman}   
      [(v_1, \sigma_1),(v_2, \sigma_2)] = \big([v_1,v_2], \rho(v_1)\sigma_2 - \rho(v_2)\sigma_1 + b(v_1,v_2)\big).
   \end{equation}
\end{definition}

The bracket \eqref{bracketvizman} indeed gives a Lie algebra: antisymmetry is obvious, while the Jacobi identity follows from the usual Jacobi identity on $\mathfrak{g}$ and the cocycle condition on $b$ and $\rho$. 
Note that semidirect products correspond to $b=0$, while central extensions correspond to $\rho=0$.
In what follows, both $b$ and $\rho$ are nonzero.

\begin{proposition}\label{product3sphere}
With $\mathfrak{g} = T_{\id}\Diffmu(S^2)$ and $\Sigma=C^{\infty}(S^2,\mathbb{R})$, define the action 
\begin{equation*}
   \rho\colon\mathfrak{g}\to\End(\Sigma), \qquad   \rho(v)\sigma = \{\psi,\sigma\}\quad\text{for}\quad v = \sgrad \psi,
\end{equation*}
and the $2$-cocycle 
\begin{equation*}
   b\colon \mathfrak{g}\times \mathfrak{g}\to \Sigma, \qquad b(v_1,v_2) = -\{\psi_1,\psi_2\} \quad \text{for}\quad v_i=\sgrad \psi_i .
\end{equation*}
Then the Abelian extension in Definition~\ref{abelianextdef} reproduces the Lie algebra of axisymmetric volume preserving diffeomorphisms on $S^3$.
\end{proposition}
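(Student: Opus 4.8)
The plan is to compute the Lie bracket of two axisymmetric vector fields directly in the coordinates $(\psi,\sigma)$ and match it term-by-term with the Abelian-extension bracket \eqref{bracketvizman}. First I would take two axisymmetric fields on $S^3$ written in the form \eqref{axisymmetricexplicit}, say $u_i = \tilde\sigma_i E_1 - (E_3\tilde\psi_i) E_2 + (E_2\tilde\psi_i) E_3$ with $E_1\tilde\sigma_i = E_1\tilde\psi_i = 0$, where $\tilde\sigma_i = \sigma_i\circ\pi$ and $\tilde\psi_i = \psi_i\circ\pi$ descend to $S^2$. The key computational step is to expand $[u_1,u_2]$ using the structure relations \eqref{S3brackets} and the Leibniz rule, keeping careful track of which terms are proportional to $E_1$ (the "swirl" component, which will become the $\Sigma$-part) and which lie in the $\spanner\{E_2,E_3\}$ distribution (the "horizontal" component, which will become the $\mathfrak{g}$-part). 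Throughout, I would use that $E_1$ commutes with $\tilde\sigma_i$ and $\tilde\psi_i$ as differential operators, together with the identity $\mathcal B(\tilde f,\tilde g) = \{f,g\}\circ\pi$ recorded after \eqref{Bpoisson}, to rewrite everything in terms of the Poisson bracket on $S^2$.

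I expect the horizontal component of $[u_1,u_2]$ to reduce, after using $[E_2,E_3] = -E_1$ and the vanishing of the $E_1$-derivatives, to the field $-(E_3 \{\psi_1,\psi_2\}^{\sim}) E_2 + (E_2\{\psi_1,\psi_2\}^{\sim}) E_3$, i.e. exactly the horizontal field associated with the streamfunction $\{\psi_1,\psi_2\}$ on $S^2$; this is consistent with the known fact that $[\sgrad\psi_1,\sgrad\psi_2] = \sgrad\{\psi_1,\psi_2\}$ on the $2$-sphere (matching the first slot $[v_1,v_2]$ of \eqref{bracketvizman}). The $E_1$-component is where the extension data appear. Collecting the coefficient of $E_1$, there will be a contribution from $u_i$ acting on the swirl function $\tilde\sigma_j$ of the other field — which, because the horizontal part of $u_i$ is tangent to the Hopf fibers' base and acts on $\tilde\sigma_j$ via the planar gradient, gives $\{\psi_1,\sigma_2\} - \{\psi_2,\sigma_1\}$, matching $\rho(v_1)\sigma_2 - \rho(v_2)\sigma_1$ — plus a residual term coming from the $[E_2,E_3]=-E_1$ relation applied to the horizontal parts of $u_1$ and $u_2$, which should produce $-\{\psi_1,\psi_2\}$, matching $b(v_1,v_2)$. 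Once this identification of the three pieces is in place, I would verify the two compatibility conditions that Definition~\ref{abelianextdef} requires: that $\rho(v)\sigma = \{\psi,\sigma\}$ genuinely defines a $\mathfrak{g}$-module structure (immediate from the Jacobi identity for the Poisson bracket, since $\rho([v_1,v_2]) = [\rho(v_1),\rho(v_2)]$ unwinds to $\{\{\psi_1,\psi_2\},\sigma\} = \{\psi_1,\{\psi_2,\sigma\}\} - \{\psi_2,\{\psi_1,\sigma\}\}$), and that $b$ satisfies the $2$-cocycle condition \eqref{cocycle}, which again collapses to the Jacobi identity for $\{\cdot,\cdot\}$ on $S^2$.

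The main obstacle is bookkeeping in the central calculation: one must be scrupulous about the sign conventions inherited from \eqref{S3brackets} (the brackets $[E_i,E_j]$ carry minus signs relative to the usual $\mathfrak{so}(3)$ relations), about the sign in the definition of $\mathcal B$ in \eqref{Bpoisson} versus $\{\cdot,\cdot\}$, and about the fact that the $E_i$ act simultaneously as vector fields being bracketed and as first-order operators on the coefficient functions. A clean way to organize this, which I would adopt, is to first handle the purely horizontal sub-case $\tilde\sigma_i \equiv 0$ (recovering the $2$-D identity and isolating the $b$-term as the obstruction to $[u_1,u_2]$ being again horizontal), then the "mixed" sub-case where one field is a pure swirl $\tilde\sigma E_1$ (recovering $\rho$, using $[E_1,\tilde\sigma E_1]=0 \Leftrightarrow E_1\tilde\sigma = 0$ as already noted in the text), and finally invoke bilinearity; this decomposition makes each sign check local and transparent. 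The verification of the cocycle and module axioms is then routine and can be stated as following directly from the Jacobi identity for the Poisson bracket.
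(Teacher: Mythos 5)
Your proposal is correct and follows essentially the same route as the paper's proof: decompose each $u_i$ into its swirl part $\tilde\sigma_i E_1$ and horizontal part $\sgradbar\tilde\psi_i$, compute the three pairwise brackets using \eqref{S3brackets} and bilinearity, and identify the resulting $E_1$-coefficient and horizontal field with the $\Sigma$- and $\mathfrak{g}$-slots of \eqref{bracketvizman} via the Hopf projection and the identity $\mathcal B(\tilde f,\tilde g)=\{f,g\}\circ\pi$. The only (harmless) addition is your explicit check of the module and cocycle axioms, which the paper leaves implicit since the constructed bracket is that of genuine vector fields on $S^3$ and hence automatically satisfies Jacobi.
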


\begin{proof} 
By formula \eqref{axisymmetricexplicit}, we can write arbitrary elements $u_1, u_2$ in the Lie algebra of axisymmetric volume preserving diffeomorphisms of $S^3$ in the form 
$$ u_i = \tilde\sigma_i E_1 + \sgradbar\tilde \psi_i, 
\qquad \sgradbar f := -E_3(f) E_2 + E_2(f)E_3,$$
where $\tilde \psi_i$ and $\tilde \sigma_i$ are both $E_1$-invariant functions on $S^3$. 

From the bracket relations \eqref{S3brackets}, we get $[\tilde \sigma_1E_1,\tilde \sigma_2E_1]=0$, 
$$ [\sgradbar \tilde\psi, \tilde \sigma E_1] = \mathcal{B}(\tilde\psi,\tilde \sigma) E_1,$$
and 
$$ [\sgradbar \tilde\psi_1, \sgradbar \tilde\psi_2] = \sgradbar \mathcal{B}(\tilde\psi_1,\tilde\psi_2) - \mathcal{B}(\tilde\psi_1,\tilde\psi_2) E_1,$$
with $\mathcal{B}$ defined as in \eqref{Bpoisson}. 
Hence, we obtain 
$$ [u_1,u_2] 
= \big( \mathcal{B}(\tilde\psi_1,\tilde\sigma_2) + \mathcal{B}(\tilde\sigma_1,\tilde\psi_2) - \mathcal{B}(\tilde\psi_1,\tilde\psi_2) \big) E_1 + \sgradbar \mathcal{B}(\tilde\psi_1,\tilde\psi_2).$$
Identifying each $u_i$ with an ordered pair of functions $(\tilde\psi_i,\tilde\sigma_i)$, this formula tells us that 
$$ \big[(\tilde\psi_1,\tilde\sigma_1), (\tilde\psi_2,\tilde\sigma_2)\big] = \big(\mathcal{B}(\tilde\psi_1,\tilde\psi_2), \mathcal{B}(\tilde\psi_1,\tilde\sigma_2) + \mathcal{B}(\tilde\sigma_1,\tilde\psi_2) - \mathcal{B}(\tilde\psi_1,\tilde\psi_2)\big).$$
Under identifications via the Hopf projection $\pi\colon S^3\to S^2$ as in section~\ref{sub:axisymmetry_on_spheres},  
we have deduced the Lie algebra structure on $\mathfrak{g}\times 
\Sigma$
\begin{equation}\label{adjointproduct}
\big[ (\psi_1, \sigma_1), (\psi_2,\sigma_2)\big] = \big(\{\psi_1,\psi_2\}, \{\psi_1,\sigma_2\} + \{\sigma_1,\psi_2\} - \{\psi_1,\psi_2\} \big),
\end{equation}
and this is precisely the Lie algebra \eqref{bracketvizman}
with the given choices of $\rho$ and $b$.
\end{proof}

The $L^2$ kinetic energy metric on divergence-free velocity fields $u,v\in T_{\id}\Diffmu(M)$ of a Riemannian manifold $(M,g)$ is given by 
$$ \langle u,v\rangle = \int_M g(u,v)\,\mu.$$
For axisymmetric divergence-free fields $u$ on $S^3$ represented by \eqref{axisymmetricexplicit} this yields
\begin{equation}\label{L2kineticS3}
 \langle u,u\rangle = \int_{S^3} \tilde{\sigma}^2 
+ (E_2\tilde{\psi})^2 + (E_3\tilde{\psi})^2 \, .
\end{equation}
We can compute that for $E_1$-invariant functions $\tilde{\psi}$, 
$$ E_2^2(\tilde{\psi}) + E_3^2(\tilde{\psi}) = \Delta \psi,$$
in terms of the usual Laplacian on $S^2$,
and thus the metric \eqref{L2kineticS3} reduces to 
\begin{equation}\label{metricdiffeoproduct}
\langle u, u\rangle = 4\pi \int_{S^2} 
\sigma^2 + \lvert \nabla \psi\rvert^2 \, .
\end{equation}
\arxivonly{See Appendix \ref{descendmetricappendix} for details of these computations.}

The following proposition is essentially the statement that axisymmetric volume preserving diffeomorphisms constitute a totally geodesic subgroup of all volume preserving diffeomorphisms.
We provide an explicit derivation, since it also applies to the corresponding Zeitlin product.

\begin{proposition}\label{eulerarnolddiffeoproduct}
   The Euler--Arnold equation for the Lie algebra $T_\id\Diffmu(S^2)\times C^\infty(S^2)$, with bracket \eqref{adjointproduct} and right-invariant metric \eqref{metricdiffeoproduct}, is given by the equations \eqref{3Deuleraxi}.
   They describe axisymmetric solutions to the Euler equations on $S^3$.
\end{proposition}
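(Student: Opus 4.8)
The plan is to compute the operator $\adstar$ for the Abelian-extension bracket \eqref{adjointproduct} with respect to the metric \eqref{metricdiffeoproduct}, substitute into the Euler--Arnold equation \eqref{eulerarnold}, and check that the resulting system coincides with \eqref{3Deuleraxi}. First I would fix notation: write an element of the Lie algebra as $(\psi,\sigma)$ and let the velocity be $u=(\psi,\sigma)$ with the understanding that the ``momentum'' conjugate to $\psi$ under the metric is $\Delta\psi$ (since $\int |\nabla\psi|^2 = -\int \psi\,\Delta\psi$) while the momentum conjugate to $\sigma$ is just $\sigma$. So the metric pairing reads $\langle (\psi_1,\sigma_1),(\psi_2,\sigma_2)\rangle = 4\pi\int_{S^2} \bigl(\sigma_1\sigma_2 - \psi_1\,\Delta\psi_2\bigr)$, using self-adjointness of $\Delta$ and integration by parts; I will drop the harmless overall constant $4\pi$.

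Next I would compute $\ad_{(\psi,\sigma)}(\phi,\tau)$ directly from \eqref{adjointproduct}, namely $\ad_{(\psi,\sigma)}(\phi,\tau) = \bigl(\{\psi,\phi\},\,\{\psi,\tau\}+\{\sigma,\phi\}-\{\psi,\phi\}\bigr)$, and then determine $\adstar_{(\psi,\sigma)}(\phi,\tau)=(\alpha,\beta)$ from the defining relation $\langle \adstar_{(\psi,\sigma)}(\phi,\tau),(\chi,\eta)\rangle = \langle (\phi,\tau), \ad_{(\psi,\sigma)}(\chi,\eta)\rangle$ for all $(\chi,\eta)$. The right-hand side expands, via the metric, into integrals of the form $\int \tau(\{\psi,\eta\}+\{\sigma,\chi\}-\{\psi,\chi\})$ and $-\int \phi\,\Delta(\{\psi,\chi\})$; I then use the cyclic/antisymmetry identity for the Poisson bracket $\int f\{g,h\} = \int g\{h,f\} = \int h\{f,g\}$ (equivalently, $\{\psi,\cdot\}$ is skew-adjoint in $L^2(S^2)$ because $\sgrad\psi$ is divergence free) to move all the bracket operators off $\chi$ and $\eta$. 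Matching coefficients of $\eta$ gives the $\sigma$-component of $\adstar$, and matching coefficients of $\chi$ (after applying $\Delta^{-1}$ on the complement of the constants, or equivalently reading off the $\psi$-slot via the pairing $-\int \chi\,\Delta(\,\cdot\,)$) gives the $\psi$-component. I expect to obtain something like $\adstar_{(\psi,\sigma)}(\phi,\tau) = \bigl(\Delta^{-1}(\{\psi,\Delta\phi\} + \{\sigma,\tau\} + \{\psi,\cdot\}\text{-swirl terms}),\ \{\psi,\tau\}\bigr)$, with the precise swirl contribution being exactly the term that couples $\sigma$ into the $\psi$-equation.

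Then, setting $\dot u + \adstar_u u = 0$ with $u=(\psi,\sigma)$ and using $\phi=\psi$, $\tau=\sigma$ in the formula just derived, the $\sigma$-component yields $\dot\sigma + \{\psi,\sigma\}=0$ immediately, and the $\psi$-component, after multiplying through by $\Delta$, yields $\Delta\dot\psi + \{\psi,\Delta\psi\} + \{\psi,\sigma\} + (\text{the extra }b\text{-cocycle contribution})=0$; I will need to verify that the cocycle term $b(v_1,v_2)=-\{\psi_1,\psi_2\}$ contributes precisely so that the $\sigma$ appearing in the $\psi$-equation is $\Delta\psi+\sigma$ rather than, say, $\Delta\psi - \sigma$ or $\sigma$ alone — matching \eqref{3Deuleraxi}. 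For the last sentence of the proposition, I would invoke the derivation already given in section~\ref{sub:axisymmetry_on_spheres}: equations \eqref{3Deuleraxi} were shown there to be the descent of the $S^3$ vorticity equation \eqref{3Dvorticityvelocity} restricted to axisymmetric fields, and by Proposition~\ref{product3sphere} the bracket \eqref{adjointproduct} together with the metric \eqref{metricdiffeoproduct} (which is the descent of \eqref{L2kineticS3}) is exactly the Euler--Arnold data for that subgroup, so the two coincide.

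The main obstacle I anticipate is bookkeeping the several Poisson-bracket integration-by-parts moves without a sign error, especially disentangling the contributions of the module action $\rho$ from those of the cocycle $b$ in the $\psi$-slot of $\adstar$ — this is exactly where the nonstandard ``$-\{\psi_1,\psi_2\}$'' term in \eqref{adjointproduct} makes the computation differ from the plain semidirect product, and it is what produces the $\Delta\psi+\sigma$ combination characteristic of axisymmetric swirl. A secondary subtlety is the invertibility of $\Delta$ on $C^\infty(S^2)$: one works modulo constants, noting that the constant functions lie in the kernel of every Poisson bracket and of $\Delta$, so they decouple and cause no trouble; I would remark on this rather than belabor it.
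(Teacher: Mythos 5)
Your approach is essentially the paper's: the paper likewise determines $\adstar$ from the defining relation \eqref{adstardef}, the metric \eqref{metricdiffeoproduct}, and the integration-by-parts identity $\int_{S^2}(f\{g,h\}+h\{g,f\})=0$; the only cosmetic difference is that it keeps everything in weak form, testing $\partial_t(\psi,\sigma)+\adstar_{(\psi,\sigma)}(\psi,\sigma)$ against an arbitrary pair $(f,g)$, rather than writing $\adstar$ in closed form with $\Delta^{-1}$. The one concrete point you must fix is a sign convention: you take $\ad_{(\psi,\sigma)}(\phi,\tau)$ to be the bracket \eqref{adjointproduct} itself, whereas the $\ad$ entering \eqref{eulerarnold}--\eqref{adstardef} for a \emph{right}-invariant metric is the \emph{negative} of that bracket (the Lie bracket of right-invariant vector fields is minus the commutator); the paper invokes this explicitly in \eqref{adjointnegsign}. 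Carried through literally, your convention reverses the sign of every quadratic term and yields $\Delta\dot\psi-\{\psi,\Delta\psi+\sigma\}=0$, $\dot\sigma-\{\psi,\sigma\}=0$, the time-reversal of \eqref{3Deuleraxi} rather than \eqref{3Deuleraxi} itself. (Tellingly, the $\adstar$ you ``expect,'' with second slot $+\{\psi,\tau\}$, is the one produced by the negative bracket, so your guessed answer and your stated $\ad$ are incompatible by exactly this sign.) Everything else --- the role of the cocycle in producing the combination $\Delta\psi+\sigma$, the remark that constants lie in the kernel of $\Delta$ and of all Poisson brackets so $\Delta^{-1}$ causes no trouble, and deferring the final sentence to Section~\ref{sub:axisymmetry_on_spheres} and Proposition~\ref{product3sphere} --- matches the paper's argument.
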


\begin{proof}
Using \eqref{adjointproduct} and the fact that the adjoint operator is the negative of the Lie bracket of the right-invariant vector fields, we have for any functions $g$ and $\sigma$, and any mean-zero functions $f$ and $\psi$, that 
\begin{equation}\label{adjointnegsign}
\ad_{(\psi,\sigma)}(f,g) = \big(  -\{\psi,f\}, -\{\psi,g\} -\{\sigma,f\} + \{\psi,f\} \big).
\end{equation}
The Euler--Arnold equation is given by 
$$ \partial_t(\psi,\sigma) + \adstar_{(\psi,\sigma)}(\psi,\sigma) = 0. $$
Consequently, $(\psi,\sigma)$ satisfies the Euler--Arnold equation if and only if for every pair of functions $(f,g)$ we have 
\begin{equation}\label{weakeulerarnold}
EA:=\langle \partial_t(\psi,\sigma), (f,g)\rangle 
+ \langle (\psi,\sigma), \ad_{(\psi,\sigma)}(f,g) \rangle = 0.
\end{equation}
From \eqref{metricdiffeoproduct} we then obtain
$$
EA
= \int_{S^2}  g\dot\sigma - f\Delta\dot\psi 
+ \sigma (-\{\psi,g\} -\{\sigma,f\} + \{\psi,f\}) + \Delta\psi \{ \psi,f\}  \, .
$$
Now using the formula $\int_{S^2} (f\{g,h\}+h\{g,f\}) = 0$, which is essentially an integration by parts using Stokes' Theorem, we obtain 
$$ EA 
= \int_{S^2} g\big(\dot\sigma + \{\psi,\sigma\}\big)
- f\big( \Delta\dot\psi + \{\psi, \sigma\} + \{\psi, \Delta \psi\}\big) \, ,$$
and this is zero for every $f$ and $g$ if and only if $\psi$ and $\sigma$ satisfy the equations \eqref{3Deuleraxi}.
\end{proof}

\subsection{Casimir functions}

In addition to the Hamiltonian, Euler--Arnold equations conserve the Casimir functions associated with the Lie--Poisson structure.
For the axisymmetric Euler equation~\eqref{3Deuleraxi} there are infinitely many Casimir functions, corresponding to the magnetic swirls and cross-helicity in 2-D incompressible magneto-hydrodynamics~\cite{VD,MoGr1980}.
Thus, the situation for axisymmetric Euler equations is quite different from the full 3-D case, where there are only finitely many independent Casimirs.

\begin{proposition}\label{prop:casimirs}
   Consider the Lie algebra $\mathfrak{g}\times\Sigma$ in Proposition~\ref{product3sphere}.
   For an arbitrary $f\in C^\infty(\mathbb{R})$, the functionals on $(\mathfrak{g}\times\Sigma)^\star\simeq \mathfrak{g}\times\Sigma$ given by
   \begin{equation*}
      C_f = \int_{S^2} f\circ\sigma , \qquad \old{I = \int_{S^2 } (\Delta\psi)\sigma} \new{I_f = \int_{S^2 } (\Delta\psi)f\circ\sigma}  ,
   \end{equation*}
   are Casimir functions for the corresponding Lie--Poisson structure on $(\mathfrak{g}\times\Sigma)^\star$.
\end{proposition}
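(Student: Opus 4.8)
The plan is to verify directly the defining property of a Casimir for a Lie--Poisson structure: a functional $C$ on $(\mathfrak g\times\Sigma)^\star$ is a Casimir exactly when $\adstar_{\delta C/\delta\mu}\,\mu=0$ for every $\mu$, equivalently when $\langle\mu,[\,\tfrac{\delta C}{\delta\mu},\xi\,]\rangle=0$ for every test element $\xi\in\mathfrak g\times\Sigma$, where $[\cdot,\cdot]$ is the bracket \eqref{adjointproduct} and $\tfrac{\delta C}{\delta\mu}\in\mathfrak g\times\Sigma$ is the functional derivative. The first step is pure bookkeeping: identify $(\mathfrak g\times\Sigma)^\star$ with pairs $\mu=(\omega,\sigma)$ under the pairing $\langle(\omega,\sigma),(\phi,\eta)\rangle=\int_{S^2}(\omega\phi+\sigma\eta)$, the first slot being the vorticity of the $\mathfrak g$-component (written $\omega=\Delta\psi$ to match the statement) and $\sigma$ the swirl. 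On $S^2$ a vorticity is automatically mean-zero, consistent with $\mathfrak g=T_\id\Diffmu(S^2)$; an additive constant in a streamfunction is killed by every Poisson bracket and so is immaterial throughout; and, as is standard for Casimirs, the computation never sees the inertia operator, so $\Delta\psi$ may simply be treated as an independent variable $\omega$.

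Reading off functional derivatives in these coordinates gives $\tfrac{\delta C_f}{\delta\mu}=(0,\,f'\!\circ\sigma)$ and $\tfrac{\delta I_f}{\delta\mu}=(f\!\circ\sigma,\,\omega\,(f'\!\circ\sigma))$, where the first entry always belongs to the streamfunction slot of $\mathfrak g$. Substituting into \eqref{adjointproduct}, pairing against $\mu=(\omega,\sigma)$, and requiring vanishing for every $\xi=(\phi,\eta)$, the whole computation runs on three elementary facts about the Poisson bracket on $S^2$: the chain rule $\{F\!\circ\sigma,\phi\}=(F'\!\circ\sigma)\{\sigma,\phi\}$; the identity $\int_{S^2}\{G,\phi\}=0$; and the total antisymmetry of the trilinear form $(a,b,c)\mapsto\int_{S^2}a\{b,c\}$, which follows from the integration-by-parts relation $\int_{S^2}(a\{b,c\}+c\{b,a\})=0$ used already in the proof of Proposition~\ref{eulerarnolddiffeoproduct}. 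For $C_f$ only the $\sigma$-component of the bracket is nonzero, and the pairing collapses to $\int_{S^2}\sigma\{f'\!\circ\sigma,\phi\}=\int_{S^2}\{h\!\circ\sigma,\phi\}=0$ with $h'(t)=t\,f''(t)$. For $I_f$ one groups the pairing into the terms carrying $\eta$ and those carrying $\phi$: the $\eta$-terms collapse to $\int_{S^2}\{k\!\circ\sigma,\eta\}=0$ with $k'(t)=t\,f'(t)$; among the $\phi$-terms, the contribution of the explicit $-\{\psi_1,\psi_2\}$ summand of \eqref{adjointproduct} also reduces to an exact Poisson bracket and therefore vanishes, and after applying the chain rule exactly three terms remain, namely $\int_{S^2}\omega\,(f'\!\circ\sigma)\{\sigma,\phi\}$, $\int_{S^2}\sigma\,(f'\!\circ\sigma)\{\omega,\phi\}$, and $\int_{S^2}\sigma\omega\,(f''\!\circ\sigma)\{\sigma,\phi\}$.

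The one genuinely non-mechanical point --- and the one I expect to be the main obstacle when writing out the details --- is showing that these last three terms cancel. This requires the \emph{total} antisymmetry of $\int a\{b,c\}$, not merely two-slot integration by parts: rewriting the middle term as $-\int_{S^2}\omega\{\sigma f'(\sigma),\phi\}=-\int_{S^2}\omega\,(f'\!\circ\sigma)\{\sigma,\phi\}-\int_{S^2}\omega\sigma\,(f''\!\circ\sigma)\{\sigma,\phi\}$ exactly annihilates the other two, finishing the proof; everything else is routine substitution. As consistency checks, $I_f$ with $f=\mathrm{id}$ reproduces the helicity-type invariant $\int_{S^2}(\Delta\psi)\sigma$, and the two families $C_f$ and $I_f$ are the counterparts of the magnetic-swirl and cross-helicity Casimirs of two-dimensional incompressible MHD under the dictionary $\psi\leftrightarrow$ velocity streamfunction, $\sigma\leftrightarrow$ magnetic flux function, so one could alternatively deduce the proposition from that correspondence; the direct verification above is, however, short and self-contained.
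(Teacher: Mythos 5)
Your proof is correct, but it follows a genuinely different route from the paper's. You verify the actual Lie--Poisson Casimir condition, $\ad^{\star}_{\delta C/\delta\mu}\,\mu=0$ for every point $\mu=(\omega,\sigma)$ of the dual, working directly from the bracket \eqref{adjointproduct}; the paper instead differentiates $C_f$ and $I_f$ along solutions of the governing equations \eqref{3Deuleraxi} and kills each term with the chain rule $\tfrac{d}{dt}f\circ\sigma=-\{\psi,f\circ\sigma\}$ together with the integration-by-parts identity $\int_{S^2}(a\{b,c\}+c\{b,a\})=0$. The two arguments run on the same two elementary facts (the chain rule for $\{f\circ\sigma,\cdot\}$ and the total antisymmetry of $(a,b,c)\mapsto\int_{S^2}a\{b,c\}$), and your three-term cancellation for $I_f$ is the coadjoint-orbit counterpart of the paper's cancellation of $\langle\Delta\psi,\{\psi,f\circ\sigma\}\rangle$ against $\langle\{\psi,\Delta\psi\},f\circ\sigma\rangle$. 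What your version buys is that it proves the statement as literally worded: a Casimir must Poisson-commute with \emph{every} Hamiltonian, and checking $\langle\mu,[\delta C/\delta\mu,\xi]\rangle=0$ for arbitrary $\xi$ establishes exactly that, whereas the paper's computation, taken at face value, only exhibits $C_f$ and $I_f$ as first integrals of the particular geodesic flow. The price is a slightly longer computation (the functional derivative $\delta I_f/\delta\mu=(f\circ\sigma,\,\omega f'\circ\sigma)$ and the resulting three-term identity), and you are right that this cancellation, requiring antisymmetry in the first two slots of $\int a\{b,c\}$ rather than only in the last two, is the one non-mechanical step; your bookkeeping of the cocycle term and of the additive-constant ambiguity in the streamfunction slot is also handled correctly.
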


\begin{proof}
   From the governing equations~\eqref{3Deuleraxi} we obtain, first 
   for $C_f$ that
   \begin{equation*}
      \frac{d}{dt}C_f = \langle f'\circ\sigma, \dot\sigma\rangle_{L^2} = \langle f'\circ\sigma, -\{\psi,\sigma \}\rangle_{L^2} = 
      \langle \underbrace{\{f'\circ\sigma, \sigma\}}_{0},\psi \rangle_{L^2} = 0,
   \end{equation*}
   and then for \old{$I$}\new{$I_f$} that
   \old{
   \begin{align*}
      \frac{d}{dt}I &= \langle \Delta\psi, \dot\sigma\rangle_{L^2} + \langle \Delta\dot\psi,\sigma\rangle_{L^2} = 
      \langle \Delta\psi, \dot\sigma\rangle_{L^2} - \langle \{ \psi, \sigma + \Delta \psi\}, \sigma\rangle_{L^2} = \\
      & -\langle \Delta\psi, \{\psi,\sigma \}\rangle_{L^2} - \langle \{ \psi, \Delta \psi\}, \sigma\rangle_{L^2} = 0.
   \end{align*}
   }
   \new{
   \begin{align*}
      \frac{d}{dt}I_f &= \langle \Delta\psi, \frac{d}{dt}f\circ\sigma)\rangle_{L^2} + \langle \Delta\dot\psi,f\circ\sigma\rangle_{L^2} = \\
      & \langle \Delta\psi, \frac{d}{dt}f\circ\sigma\rangle_{L^2} - \langle \{ \psi, \sigma + \Delta \psi\}, f\circ\sigma\rangle_{L^2} = \\
      & -\langle \Delta\psi, \{\psi,f\circ\sigma \}\rangle_{L^2} - \langle \{ \psi, \Delta \psi\}, f\circ\sigma\rangle_{L^2} = 0.
   \end{align*}
   }
\end{proof}

\subsection{Spatial discretization via Zeitlin's approach}

We now turn to our main point: a Zeitlin  discretization for the Euler--Arnold structure in Proposition~\ref{eulerarnolddiffeoproduct}.


\begin{theorem}\label{productzeitlinthm}
Let $\mathfrak{g}=\mathfrak{su}(n)$ equipped with the scaled commutator bracket $\frac{1}{\hbarN}[\cdot,\cdot]$.
With $\Sigma = \mathfrak{u}(n)$, define the action $\rho$ of $\mathfrak{g}$ on $\Sigma$ by $\rho(P)\sigmamat = \frac{1}{\hbarN}[P,\sigmamat]$, and define a $2$-cocycle $b\colon \mathfrak{g}\times \mathfrak{g}\to \Sigma$ by $b(P_1,P_2) = -\frac{1}{\hbarN}[P_1,P_2]$.
Consider then the Abelian extension in Definition~\ref{abelianextdef} with the Lie bracket~\eqref{bracketvizman}.
Define an inner product on $\mathfrak{su}(n)\times \mathfrak{u}(n)$ by
\begin{equation}\label{zeitlinprodmet}
\big\langle (P_1,\sigmamat_1), (P_2,\sigmamat_2)\big\rangle =   \operatorname{tr}(P_1 \Delta_n P_2) - \operatorname{tr}(\sigmamat_1 \sigmamat_2),
\end{equation}
where $\Delta_n$ is the Hoppe-Yau Laplacian \eqref{hoppeyau}.
Then the corresponding Euler-Arnold equation is 
\begin{equation}\label{zeitlinproduct}
\Delta_n \dot P + \frac{1}{\hbarN}[P, \Delta_n P + \sigmamat]
=0, \qquad
\dot \sigmamat + \frac{1}{\hbarN}[P, \sigmamat] = 0.
\end{equation}
\end{theorem}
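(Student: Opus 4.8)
The plan is to mirror the proof of Proposition~\ref{eulerarnolddiffeoproduct} verbatim, replacing functions by matrices, the Poisson bracket $\{\cdot,\cdot\}$ by the scaled commutator $\frac{1}{\hbarN}[\cdot,\cdot]$, the $L^2$ pairing $\int_{S^2} fg$ by $\operatorname{tr}(WP)$ (with the appropriate sign from \eqref{zeitlinprodmet}), and the Laplacian $\Delta$ by the Hoppe--Yau Laplacian $\Delta_n$. The point is that every identity used in the function setting has an exact matrix analogue, so the derivation transfers without new ideas.

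Concretely, I would proceed in four steps. First, record the adjoint operator: since the Lie bracket on $\mathfrak{su}(n)\times\mathfrak{u}(n)$ is \eqref{bracketvizman} with the stated $\rho$ and $b$, and the Euler--Arnold adjoint is minus the bracket of right-invariant fields, one gets, for $(P,\sigmamat)\in\mathfrak g\times\Sigma$ and a test pair $(Q,R)$,
\begin{equation*}
\ad_{(P,\sigmamat)}(Q,R) = \big( -\tfrac{1}{\hbarN}[P,Q],\ -\tfrac{1}{\hbarN}[P,R] - \tfrac{1}{\hbarN}[\sigmamat,Q] + \tfrac{1}{\hbarN}[P,Q]\big),
\end{equation*}
exactly parallel to \eqref{adjointnegsign}. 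Second, write the weak Euler--Arnold equation $\langle \partial_t(P,\sigmamat),(Q,R)\rangle + \langle (P,\sigmamat), \ad_{(P,\sigmamat)}(Q,R)\rangle = 0$ for all $(Q,R)$, and expand it using \eqref{zeitlinprodmet}. Third, substitute and use the trace identity $\operatorname{tr}\big(X[Y,Z]\big) + \operatorname{tr}\big(Z[Y,X]\big) = 0$ (cyclicity of the trace, the matrix replacement for the integration-by-parts identity $\int_{S^2}(f\{g,h\}+h\{g,f\})=0$) to move commutators off the test matrices. Fourth, collect the coefficients of $R$ and of $Q$ separately; vanishing for all $R\in\mathfrak u(n)$ gives the swirl equation $\dot\sigmamat + \frac{1}{\hbarN}[P,\sigmamat]=0$, and vanishing for all $Q\in\mathfrak{su}(n)$ gives $\Delta_n\dot P + \frac{1}{\hbarN}[P,\Delta_n P + \sigmamat] = 0$, which is \eqref{zeitlinproduct}.

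There are two small technical points worth care rather than genuine obstacles. One is the test-space mismatch: $Q$ ranges over $\mathfrak{su}(n)$ (traceless) while $R$ ranges over all of $\mathfrak u(n)$, just as in the continuous case $f$ was mean-zero and $g$ was arbitrary; one must check that the first component of the bracket, $\frac{1}{\hbarN}[P_1,P_2]$, indeed lands in $\mathfrak{su}(n)$ (true, since a commutator is traceless) so the structure is consistent, and that restricting to traceless $Q$ still forces the $P$-equation — here one notes $\Delta_n$ preserves tracelessness and the identity component of $\frac{1}{\hbarN}[P,\cdot]$ only sees the traceless part, so no information is lost. The other is simply bookkeeping the minus sign in \eqref{zeitlinprodmet} (the $-\operatorname{tr}(\sigmamat_1\sigmamat_2)$ term), which is what makes the extra $+\frac{1}{\hbarN}[P,Q]$ term in the adjoint combine correctly; tracking it carefully reproduces $\Delta_n P + \sigmamat$ (not $\Delta_n P - \sigmamat$) inside the bracket. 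I expect the ``hard part'' to be nothing more than being scrupulous with these signs and with which matrix sits where under the trace; there is no analytic difficulty since everything is finite-dimensional. If desired, one can remark that this computation simultaneously shows $\Delta_n$-weighted $\mathfrak{su}(n)$ sits as a totally geodesic subgroup (the $\sigmamat=0$ slice recovers \eqref{zeitlinbasic}), paralleling the remark before Proposition~\ref{eulerarnolddiffeoproduct}.
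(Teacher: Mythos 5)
Your proposal is correct and follows essentially the same route as the paper's proof: compute $\ad_{(P,\sigmamat)}$ from the bracket \eqref{bracketvizman}, write the weak Euler--Arnold identity against a test pair, and use cyclicity of the trace (the matrix analogue of integration by parts) together with self-adjointness of $\Delta_n$ to collect the coefficients of the two test matrices. The extra remarks on the traceless test space and the sign of the $-\operatorname{tr}(\sigmamat_1\sigmamat_2)$ term are accurate and consistent with the paper's computation.
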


\begin{proof}
   The $\ad$ operator is the negative of the Lie bracket
   \begin{equation*}
      \ad_{(P,\sigmamat)}(U,V) = \Big( -\frac{1}{\hbarN}[P,U], -\frac{1}{\hbarN}[P,V] + \frac{1}{\hbarN}[U,\sigmamat] + \frac{1}{\hbarN}[P,U] \Big).
   \end{equation*}
   We compute the analogue of \eqref{weakeulerarnold} by the same method as in the proof of Proposition \ref{eulerarnolddiffeoproduct}, using bi-invariance of the trace metric: 
   \begin{align*} 
   EA_n :&= \langle ( \dot P, \dot \sigmamat), (U, V)\rangle 
   + \langle ( P, \sigmamat), \ad_{(P,\sigmamat)}(U,V)\rangle \\
   &= \operatorname{tr}\big(\dot P\Delta_n U \big) - \operatorname{tr}\big(\dot \sigmamat V\big) - \operatorname{tr}\big( (\Delta_n P)\frac{1}{\hbarN}[P, U] \big) + \operatorname{tr}\big(\sigmamat (\frac{1}{\hbarN}[P,V] - \frac{1}{\hbarN}[U,\sigmamat] - \frac{1}{\hbarN}[P,U])\big) \\
   &= \operatorname{tr}\big(\Delta_n\dot P U \big) - \operatorname{tr}\big(\dot \sigmamat V\big) + \operatorname{tr}\big( \frac{1}{\hbarN}[P,\Delta_n P]U \big) - \operatorname{tr}\big(\frac{1}{\hbarN}[P,\sigmamat]V\big) + \operatorname{tr}\big(\frac{1}{\hbarN}[P,\sigmamat]U\big) \\
   &= \operatorname{tr}\big((\Delta_n\dot P + \frac{1}{\hbarN}[P,\Delta_n P] + \frac{1}{\hbarN}[P,\sigmamat]) U \big) - \operatorname{tr}\big((\dot \sigmamat+\frac{1}{\hbarN}[P,\sigmamat]) V\big) .
   \end{align*}
   This is zero for all $(V,U)\in \mathfrak{u}(N)\times \mathfrak{su}(n)$ if and only if equations \eqref{zeitlinproduct} are satisfied.
\end{proof}

From quantization theory we know that if $P_1,P_2\in \mathfrak{su}(n)$ are related to $\psi_1,\psi_2\in C^\infty(S^2)$ via the quantization $\mathcal T_n$ described in Section~\ref{zeitlinsection}, then $\frac{1}{\hbarN}[P_1, P_2] \to \mathcal T_n \{ \psi_1,\psi_2\}$ as $n\to\infty$ in the spectral norm on $\mathfrak{su}(n)$ (see \cite{ChPo2018} for details).
Thus, the equations \eqref{zeitlinproduct} provide a spatial discretization of the $S^3$ axisymmetric Euler equations~\eqref{3Deuleraxi}.

Due to the Euler--Arnold structure, the discretized equations~\eqref{zeitlinproduct} preserve analogues of the Casimir functions in Proposition~\ref{prop:casimirs}.

\begin{proposition}\label{prop:zeitlin_casimirs}
   With $\mathfrak{g}\times\Sigma$ as in Theorem~\ref{productzeitlinthm}, the Casimir functions are
   \begin{equation*}
      C^n_f = \operatorname{tr}(f(i\sigmamat)), \qquad I_f^n = i \operatorname{tr}(f(i\sigmamat)\Delta_n P)
   \end{equation*}
   where $f$ is an arbitrary real analytic function.
   These functions are thus conserved by the Euler--Arnold equations~\eqref{zeitlinproduct} on $\mathfrak{g}\times\Sigma$.
\end{proposition}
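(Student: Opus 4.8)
The plan is to transcribe the proof of Proposition~\ref{prop:casimirs} into the matrix setting, replacing the Poisson bracket $\{\cdot,\cdot\}$ by the scaled commutator $\tfrac{1}{\hbarN}[\cdot,\cdot]$, the integral $\int_{S^2}$ by the trace, and the composition $f\circ\sigma$ by the Hermitian functional calculus $f(i\sigmamat)$ (which makes sense since $\sigmamat\in\mathfrak{u}(n)$, so $i\sigmamat$ is Hermitian with real spectrum). The structural fact that makes everything go through is that the second equation of \eqref{zeitlinproduct} is of Lax type: it reads $\tfrac{d}{dt}(i\sigmamat) = [-\tfrac{1}{\hbarN}P,\, i\sigmamat]$ with $-\tfrac{1}{\hbarN}P\in\mathfrak{su}(n)$, so $i\sigmamat(t)$ evolves by unitary conjugation. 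Consequently its spectrum, and hence $\lVert\sigmamat(t)\rVert$, is constant along the flow, so any $f$ that is real analytic near the spectrum of the initial datum $i\sigmamat(0)$ yields globally well-defined functionals $C^n_f$ and $I^n_f$; moreover, expanding $f$ in a convergent power series and telescoping $\tfrac{d}{dt}A^k = \sum_{j} A^j[X,A]A^{k-1-j} = [X,A^k]$, one obtains the matrix chain rule
\[
\tfrac{d}{dt}f(i\sigmamat) = -\tfrac{1}{\hbarN}[P, f(i\sigmamat)].
\]

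With this identity the claim for $C^n_f$ is immediate: $\tfrac{d}{dt}C^n_f = \operatorname{tr}\!\big(\tfrac{d}{dt}f(i\sigmamat)\big) = -\tfrac{1}{\hbarN}\operatorname{tr}\big([P,f(i\sigmamat)]\big) = 0$, since the trace of a commutator vanishes. This plays the same role here that the identity $\{f'\circ\sigma,\sigma\} = 0$ played for $C_f$ in Proposition~\ref{prop:casimirs}.

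For $I^n_f$ I would set $F := f(i\sigmamat)$ and differentiate using the product rule together with the fact that $\Delta_n$ is a fixed linear map commuting with $\tfrac{d}{dt}$, giving $\tfrac{d}{dt}I^n_f = i\operatorname{tr}(\dot F\,\Delta_n P) + i\operatorname{tr}(F\,\Delta_n\dot P)$. Substituting the matrix chain rule in the first term and the first equation of \eqref{zeitlinproduct}, namely $\Delta_n\dot P = -\tfrac{1}{\hbarN}[P,\Delta_n P + \sigmamat]$, in the second, the right-hand side becomes
\[
-\tfrac{i}{\hbarN}\big(\operatorname{tr}([P,F]\Delta_n P) + \operatorname{tr}(F[P,\Delta_n P]) + \operatorname{tr}(F[P,\sigmamat])\big).
\]
The last trace vanishes because $F$ commutes with $\sigmamat$, so $\operatorname{tr}(F[P,\sigmamat]) = -\operatorname{tr}([F,\sigmamat]P) = 0$; and the first two combine by cyclicity of the trace to $\operatorname{tr}(PF\Delta_n P) - \operatorname{tr}(F\Delta_n P\,P) = 0$. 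Hence $\tfrac{d}{dt}I^n_f = 0$. This is the matrix counterpart of the cancellation $-\langle\Delta\psi,\{\psi,f\circ\sigma\}\rangle_{L^2} - \langle\{\psi,\Delta\psi\},f\circ\sigma\rangle_{L^2} = 0$ appearing in Proposition~\ref{prop:casimirs}.

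The only step requiring genuine care is the matrix chain rule, where one must justify term-by-term differentiation of the power series of $f$; this is precisely where isospectrality is used, since it pins down a fixed compact set — the spectrum of $i\sigmamat(0)$ — on which $f$ and $f'$ converge uniformly for all $t$. Everything else is trace bookkeeping formally identical to Proposition~\ref{prop:casimirs}, so I anticipate no further obstacle.
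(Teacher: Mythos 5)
Your proof is correct. The paper actually states Proposition~\ref{prop:zeitlin_casimirs} without proof, leaving it as the evident matrix analogue of Proposition~\ref{prop:casimirs}; your argument is exactly that transcription, and every step checks out: the Lax form $\tfrac{d}{dt}(i\sigmamat)=[-\tfrac{1}{\hbarN}P,i\sigmamat]$ gives isospectral evolution and the chain rule $\tfrac{d}{dt}f(i\sigmamat)=-\tfrac{1}{\hbarN}[P,f(i\sigmamat)]$, after which $C^n_f$ is conserved because commutators are traceless, and the three terms in $\tfrac{d}{dt}I^n_f$ cancel by cyclicity together with $[f(i\sigmamat),\sigmamat]=0$. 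The only caveat, which applies equally to the paper's own proof of Proposition~\ref{prop:casimirs}, is that you verify conservation along the specific Euler--Arnold flow \eqref{zeitlinproduct} rather than invariance under arbitrary coadjoint flows (the strict meaning of ``Casimir''); your added justification of term-by-term differentiation via isospectrality is a welcome extra degree of rigor, not present in the continuous argument.
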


\section{The Jacobi equation}

Now we consider some geometric aspects of the 3-D Zeitlin model \eqref{zeitlinproduct}. Recall that the 
Jacobi equation along geodesics is given by equation \eqref{jacobisplit}.
It describes stable perturbations, which lead to conjugate points, but also possible instabilities. 
We can linearize the equations \eqref{zeitlinproduct} for perturbations $\sigmamat+\epsilon Z_1$ and $P + \epsilon Z_2$ to obtain
\begin{equation}\label{lineulerzeitlin}
\begin{split}
\dot Z_1(t) + \frac{1}{\hbarN}[ P(t), Z_1(t)] + \frac{1}{\hbarN}[Z_2(t), \sigmamat(t)] &= 0 \\ 
\qquad \Delta_n \dot Z_2(t) + \frac{1}{\hbarN}[Z_2(t), \Delta_n P(t)]
+ \frac{1}{\hbarN}[ P(t), \Delta_n Z_2(t)]
+ \frac{1}{\hbarN}[ P(t), Z_1(t)]
+ \frac{1}{\hbarN}[Z_2(t), \sigmamat(t)]&=0.
\end{split}
\end{equation}
Similarly, using the formula \eqref{adjointproduct} for the Lie bracket, the linearized flow equation \eqref{jacobisplit} for a 
Jacobi field $J$ with right translated generators $Y_1$ and $Y_2$ takes the form
\begin{equation}\label{linflowzeitlin}
\begin{split}
\dot Y_1(t) + \frac{1}{\hbarN}\big( [\sigmamat(t), Y_2(t)]+[P(t),Y_1(t)] - [P(t),Y_2(t)]\big) &= Z_1(t) \\
\dot Y_2(t) + \frac{1}{\hbarN}[P(t),Y_2(t)] &= Z_2(t).
\end{split}
\end{equation}
Our goal in this section is to illustrate how to solve this system of equations in a simple case. 

Steady solutions of the Euler-Arnold equation \eqref{zeitlinproduct} are given by matrices $(\sigmamat,P)$ satisfying 
$$ [ P, \sigmamat] = 0, \qquad [ P, \Delta_n P] = 0.$$
A simple way to satisfy these equations is to take $P=\hbarN S_3$ as in Section \ref{zeitlinsection}, since in that 
case we have $\Delta_n P = -2P$, and we also take $\sigmamat=\hbarN S_3$. This corresponds to taking $\sigma=\psi=-\cos{\theta}$
on the $2$-sphere in the equations \eqref{3Deuleraxi}, so that the underlying 2-D flow on the 2-D sphere is the rigid 
rotation by $\nabla^{\perp}\psi = \partial_{\phi}$ in the usual spherical coordinates $(\theta,\phi)$. The reason taking $\sigmamat=P$
is the simplest choice is that it reduces the first equation in \eqref{linflowzeitlin} to the same form as the second, as we will see.


\begin{theorem}\label{zeitlinjacobithm}
For the Euler velocity field in $\mathfrak{su}(n)\times\mathfrak{u}(n)$ given by $P(t)=\hbarN S_3$ and $\sigmamat(t)=\hbarN S_3$, let $\gamma(t)$ with $\gamma(0)=e$ be the corresponding geodesic curve in the Lie group.
For each positive integers $m$, $\ell$, $k$ with $m\le \ell\le (n-1)/2$, there are conjugate points \old{$\gamma(T)$}\new{$\gamma(t)$} to the identity at times $t=\frac{4\pi k\ell}{m}$ and $t=\frac{4k\pi(\ell+1)}{m}$. Each of these occurs with multiplicity $2$ for each distinct pair $(\ell,m)$ of positive integers. 
\end{theorem}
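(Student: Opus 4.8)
The plan is to substitute the steady solution $P(t)=\sigmamat(t)=\hbarN S_3$ into the Jacobi system \eqref{lineulerzeitlin}--\eqref{linflowzeitlin}, block-diagonalize it against the $\mathfrak{so}(3)$-weight decomposition, and solve the resulting two-dimensional constant-coefficient problems explicitly. Since $S_3\in V_1$ one has $\Delta_n(\hbarN S_3)=-2\hbarN S_3$ and $\tfrac1{\hbarN}[\hbarN S_3,\,\cdot\,]=\ad_{S_3}$; plugging these in, the two equations in \eqref{linflowzeitlin} decouple into the identical form $\dot Y_i+\ad_{S_3}Y_i=Z_i$ — this is exactly why taking $\sigmamat=P$ is convenient — while \eqref{lineulerzeitlin} becomes a closed linear system for $(Z_1,Z_2)$ whose coefficients involve only the commuting operators $\ad_{S_3}$ and $\Delta_n$.

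Because $\ad_{S_3}$ and $\Delta_n$ both preserve the Peter--Weyl splitting $\mathfrak u(n)=\bigoplus_\ell V_\ell$ and, within each $V_\ell$, its weight spaces, the whole Jacobi system decomposes. I would fix integers $1\le m\le\ell\le (n-1)/2$ and restrict everything to the (complexified) weight-$m$ component of $V_\ell$ in $\mathfrak g$ and in $\Sigma$, on which $\ad_{S_3}$ acts as a scalar $\nu_0$ proportional to $im$ and $\Delta_n=-\ell(\ell+1)$. There the $(Z_1,Z_2)$-subsystem is a $2\times2$ constant-coefficient ODE $\dot{\mathbf z}=A_\ell\mathbf z$. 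The key algebraic fact is that the discriminant of the characteristic polynomial of $A_\ell$ is $\nu_0^2(2\ell+1)^2/\big(\ell(\ell+1)\big)^2$ — a perfect square, thanks to the identity $4\ell(\ell+1)+1=(2\ell+1)^2$ — so $A_\ell$ has eigenvalues $\nu_\pm=\nu_0\mu_\pm$ with $\mu_++1=\tfrac1\ell$, $\mu_-+1=-\tfrac1{\ell+1}$, and explicit eigenvectors proportional to $(\ell,1)$ and $(\ell+1,-1)$.

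With the eigenstructure known, I would expand $(Z_1,Z_2)$ in the two eigenmodes, solve $\dot Y_i+\ad_{S_3}Y_i=Z_i$ with $Y_i(0)=0$ — each forcing mode $e^{\nu t}$ producing a term proportional to $e^{\nu t}-e^{-\nu_0 t}$ — and evaluate $\big(Y_1(T),Y_2(T)\big)$ as an explicit $2\times2$ linear form in the two amplitudes $(c_+,c_-)$. A conjugate point at time $T$ is a nonzero $(c_+,c_-)$ in the kernel, i.e.\ the vanishing of the determinant of this form; a short computation shows the determinant equals a nonzero multiple of $\ell(\ell+1)(2\ell+1)\big(e^{\nu_+T}-e^{-\nu_0T}\big)\big(e^{\nu_-T}-e^{-\nu_0T}\big)$. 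Hence $\gamma(T)$ is conjugate to the identity precisely when $e^{(\nu_++\nu_0)T}=1$ or $e^{(\nu_-+\nu_0)T}=1$; since $\nu_\pm+\nu_0$ are proportional to $m/\ell$ and $m/(\ell+1)$, this gives the two families $t=\tfrac{4\pi k\ell}{m}$ and $t=\tfrac{4\pi k(\ell+1)}{m}$, $k\ge1$. For the multiplicity: in each case one of $c_+,c_-$ is forced to vanish, so the kernel is one complex dimension, and pairing the weight-$(+m)$ block with the conjugate weight-$(-m)$ block to build a genuine real Jacobi field doubles this to real multiplicity $2$ for each pair $(\ell,m)$.

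The main obstacle will be the middle bookkeeping: reducing \eqref{lineulerzeitlin} on a single weight line without sign slips, pinning down the eigenvalues and eigenvectors of $A_\ell$ and the resonant shifts $\nu_\pm+\nu_0$, and tracking the solution of the forced $Y$-equations carefully enough that the $2\times2$ determinant visibly factors into the two exponential conditions; keeping the passage between the complexified $\pm m$ weight blocks and genuine real Jacobi fields straight is what makes the multiplicity statement delicate.
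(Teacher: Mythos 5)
Your proposal is correct and follows essentially the same route as the paper's proof: the diagonalizing combinations you extract from the $2\times 2$ block (eigenvectors $(\ell,1)$ and $(\ell+1,-1)$, eigenvalues $\nu_0\mu_\pm$ with $\mu_+=-\tfrac{\ell-1}{\ell}$, $\mu_-=-\tfrac{\ell+2}{\ell+1}$) are exactly the paper's variables $Z_4=Z_1+(\dbar+I)Z_2$ and $Z_3=Z_1-\dbar Z_2$, and the resonance analysis of the forced equations $\dot Y_i+\adc Y_i=Z_i$ is the same computation carried out in the complexified weight basis rather than the real $T_{\ell,\pm m}$ pairs. The only cosmetic difference is that you restrict to a weight line before diagonalizing while the paper diagonalizes globally via the operator $\dbar$ first; both yield the same conjugate times and the same multiplicity count from pairing the $\pm m$ components.
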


\begin{proof}
Using $P(t)=\hbarN S_3$ and $\sigmamat(t)=\hbarN S_3$, and writing 
$$ \adc:= \ad_{S_3},$$ 
the linearized Euler equation \eqref{lineulerzeitlin} becomes 
\begin{equation}\label{linearizedeulersimplest}
   \dot Z_1 = \adc(Z_2-Z_1), \qquad 
   \Delta_n \dot Z_2 = -\adc (Z_1+Z_2+\Delta_nZ_2). 
\end{equation}
Meanwhile, the linearized flow equation \eqref{linflowzeitlin} is given by
\begin{equation}\label{linearizedflowsimplest}
   \dot Y_1 + \adc Y_1 = Z_1, \qquad \dot Y_2+\adc Y_2 = Z_2.
\end{equation}
To solve equations \eqref{linearizedeulersimplest}--\eqref{linearizedflowsimplest}, it is convenient to define an operator 
\begin{equation}\label{DdefJacobi}
\dbar := \sqrt{-\Delta_n+\tfrac{1}{4}I}-\tfrac{1}{2}I, \qquad \dbar(T_{\ell,m}) = \ell T_{\ell,m} \quad \forall\; 0\le \ell\le s, \lvert m\rvert \le \ell.
\end{equation}
Note that $\Delta_n=-\dbar(I+\dbar)$. 
Since $\Delta_n$ commutes with $\adc$, so does $\dbar$.
 
We define the new variables 
\begin{equation}\label{Znewdef}
Z_3 = Z_1-\dbar Z_2, \qquad Z_4 = Z_1+(\dbar+I)Z_2, \qquad Y_3 = Y_1-\dbar Y_2, \qquad Y_4 = Y_2+(\dbar+I)Y_1
\end{equation}
and observe that the equations \eqref{linearizedeulersimplest} can be rewritten in the form
\begin{align*}
(\dbar+I)\,\tfrac{d}{dt}(Z_1-\dbar Z_2) &= 
(\dbar+I)(\adc(Z_2-Z_1) - \adc(Z_1 + Z_2 - (\dbar+1)\dbar Z_2) \\
&= -(\dbar+2I)\adc(Z_1-\dbar Z_2) 
\end{align*}
which implies that 
\begin{equation}\label{Z3eq}
(\dbar+I)\tfrac{d}{dt} Z_3 = -(\dbar+2I)\adc Z_3.
\end{equation}

Similarly, we obtain 
\begin{equation}\label{Z4eq}
\dbar\tfrac{d}{dt}Z_4 = -(\dbar-I)\adc Z_4.
\end{equation}
We also see that \eqref{linearizedflowsimplest} takes the form 
\begin{equation}\label{Y34eq}
\tfrac{d}{dt} Y_3 + \adc Y_3 = Z_3, \qquad \tfrac{d}{dt} Y_4 + \adc Y_4 = Z_4.
\end{equation}

We conclude from \eqref{Z3eq} that if $Z_3(0)=0$ then $Z_3(t)=0$ for all $t\ge 0$, and thus by \eqref{Y34eq} that $Y_3(t)=0$ for all $t$ since $Y_3(0)=0$. 
Similarly, if $Z_4(0)=0$, then $Y_4(t)=0$ for all $t$. 
Furthermore, since both $\adc$ and $\dbar$ are block-diagonal in the basis $T_{\ell, m}$, with 
$$ \adc T_{\ell, m} = m   T_{\ell,-m}, \qquad \dbar T_{\ell, m} = \ell T_{\ell, m}, \qquad -\ell \le m\le \ell,$$
we can write equation \eqref{Z3eq} in block diagonal form. That is, writing 
$$ Z_3(t) = \sum_{\ell=0}^s \sum_{m=-\ell}^{\ell} a_{\ell, m}(t) T_{\ell, m}, \qquad Y_3(t) = \sum_{\ell=0}^s \sum_{m=-\ell}^{\ell} c_{\ell, m}(t) T_{\ell, m},$$
we obtain the system 
$$ a_{\ell,m}'(t) = \frac{(\ell+2)m}{\ell+1}\, a_{\ell,-m}(t), \qquad c_{\ell,m}'(t) - m c_{\ell,-m}(t) = a_{\ell,m}(t), \qquad -\ell\le m\le \ell.$$

If $m\ne 0$, the solutions with $c_{\ell,m}(0)=0$ are easily found to be 
$$ c_{\ell,m}(t) = \frac{2(\ell+1)}{m}\, \sin{\left( \frac{mt}{2(\ell+1)}\right)} \left[ a_{\ell,m}(0) \cos{\left(\frac{(2\ell+3)m t}{2(\ell+1)}\right)} + a_{\ell,-m}(0) \sin{\left(\frac{(2\ell+3)m t}{2(\ell+1)}\right)} \right].$$
Hence we get conjugate points occurring at times $t=\frac{4k \pi(\ell+1)}{m}$, with multiplicity two in each block. Obviously if $m=0$ we simply get $c_{\ell,m}(t) =  a_{\ell,m}(0)t$, and there are no conjugate points arising from such initial conditions. 

Similarly solving the system \eqref{Z4eq}--\eqref{Y34eq} for $Z_4(t) = \sum b_{\ell,m}T_{\ell,m}$ and $Y_4(t) = \sum d_{\ell,m}(t) T_{\ell,m}$ gives 
$$ d_{\ell,m}(t) = \frac{2\ell}{m} \, \sin{\left(\frac{mt}{2\ell}\right)} \left[ 
b_{\ell,m}(0) \cos{\left(\frac{(2\ell-1)m t}{2\ell}\right)} + b_{\ell,-m}(0) \sin{\left(\frac{(2\ell-1)m t}{2\ell}\right)} \right],$$
and we obtain conjugate points at $t=\frac{4k\pi\ell}{m}$ for every positive integer $k$, in each block.
\end{proof}
 
The reason the analysis is particularly simple in this case is that the corresponding vector field on the $3$-sphere is a Killing field, and the combinations $Z_1+(\dbar+I)Z_2$ and $Z_1-\dbar Z_2$ occur naturally when one is computing curl eigenfields. 
See \cite{firstconjugate} for details, where the conjugate points are worked out explicitly along a similar geodesic (however we note that in that paper one considers the full volume preserving diffeomorphism group, not the axisymmetric subgroup, so there are fewer conjugate points in the present case).

\section{Numerical experiments}\label{sec:numerics}

Here we give two numerical experiments for the 3-D axisymmetric Zeitlin model \eqref{zeitlinproduct}.\footnote{A Python-based code for the simulations is available at \href{https://github.com/klasmodin/quflow}{github.com/klasmodin/quflow}. 
}

To retain the structural benefits of the Zeitlin based spatial discretization, it is essential to use a temporal discretization that preserves the underlying Lie--Poisson structure, which in turn implies conservation of Casimir functions.
Since the Casimirs for the $S^3$ axisymmetric Euler equations \eqref{3Deuleraxi} coincide with those for the 2-D incompressible magnetohydrodynamic (MHD) equations, we use the Casimir-preserving numerical integration scheme for the Zeitlin discretization of MHD, developed by Modin and Roop~\cite{MoRo2024}.
Thereby, the benefits of the spatial discretization remain in the fully discretized system of equations.

In addition to visualizations of the fields $\Delta\psi$ and $\sigma$, we demonstrate the growth of the supremum norm of the vorticity vector
\begin{equation*}
   \lVert \omega \rVert_{\infty} = \sup_{\tilde x\in S^3} \lvert\omega(\tilde x) \rvert = 
   \sup_{x\in S^2} \sqrt{(\Delta \psi + \sigma)^2 + \lvert \nabla\sigma\rvert^2}.
\end{equation*}
The analogous formula for Zeitlin's model is
\begin{equation}\label{eq:sup-norm-zeitlin}
   \lVert (\Delta_n P, \sigmamat)\rVert_{\infty} = \sqrt{\lVert -(\Delta_n P + \sigmamat)^2-\sum_{\alpha=1}^3 (\nabla_n^\alpha \sigmamat)^2\rVert},
\end{equation}
where $\lVert\cdot\rVert$ denotes the spectral norm and $\nabla_n^\alpha \sigmamat = [S_\alpha,\sigmamat]$ for $S_\alpha$ as in Section~\ref{zeitlinsection}.

See \cite{CiViMo2023} for details on how to efficiently compute $\mathcal T_n\psi$, the corresponding pseudo-inverse $\mathcal T_n^{-1}P$ (to obtain visualizations), and solution to the quantized Poisson equation $\Delta_n P = W$.

\subsection{First simulation: smooth, symmetric data}

Let $Y_{\ell,m}\in C^\infty(S^2)$ denote the real spherical harmonics.
The initial data are
\begin{equation*}
   \Delta\psi\big|_{t=0} = Y_{2,1}, \qquad \sigma\big|_{t=0} = Y_{1,0}.
\end{equation*}
These data are antisymmetric under reflection in the equatorial plane.
Consequently, the geometry corresponds to a hemisphere with no-slip boundary conditions along the equator.

Visualizations of $\Delta_n P$ and $\sigmamat$ at various output times are given in Figure~\ref{fig:blowup-evolution} for $n=1024$.
We see the formation of a shock wave in $\Delta_n P$, growing in magnitude, and a corresponding sharp gradient front in $\sigmamat$.
This formation indicates fast growth of the sup-norm \eqref{eq:sup-norm-zeitlin}.
Indeed, in Figure~\ref{fig:blowup-Linf-norm} the growth is slightly faster than exponential until the resolution allowed by $n$ is unable to resolve the increasingly steep shock wave front.

\begin{figure}
   \centering
   \begin{subfigure}{0.49\textwidth}
      \caption*{$\Delta_n P$ at $t=0$} 
      \includegraphics[width=\textwidth]{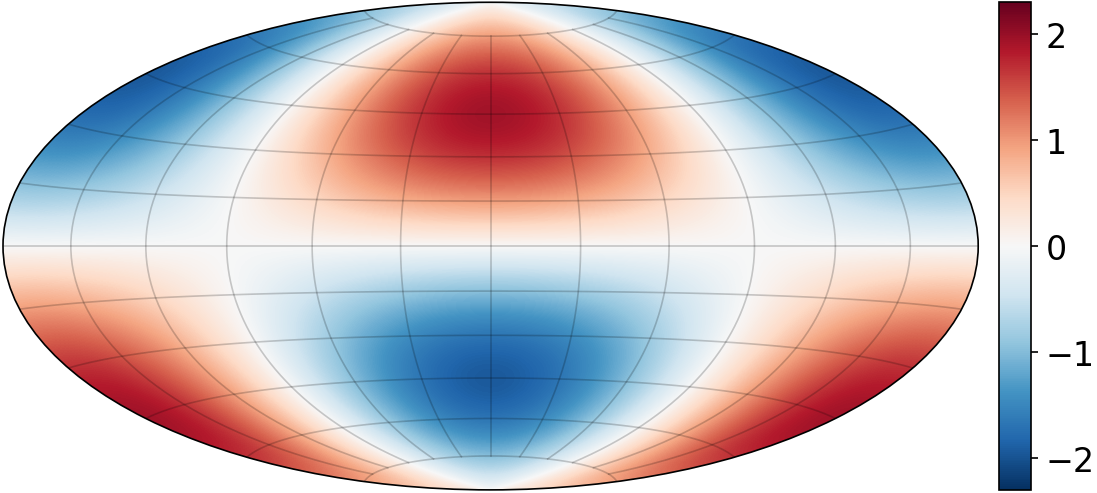}
   \end{subfigure}
   \begin{subfigure}{0.49\textwidth}
      \caption*{$\sigmamat$ at $t=0$} 
      \includegraphics[width=\textwidth]{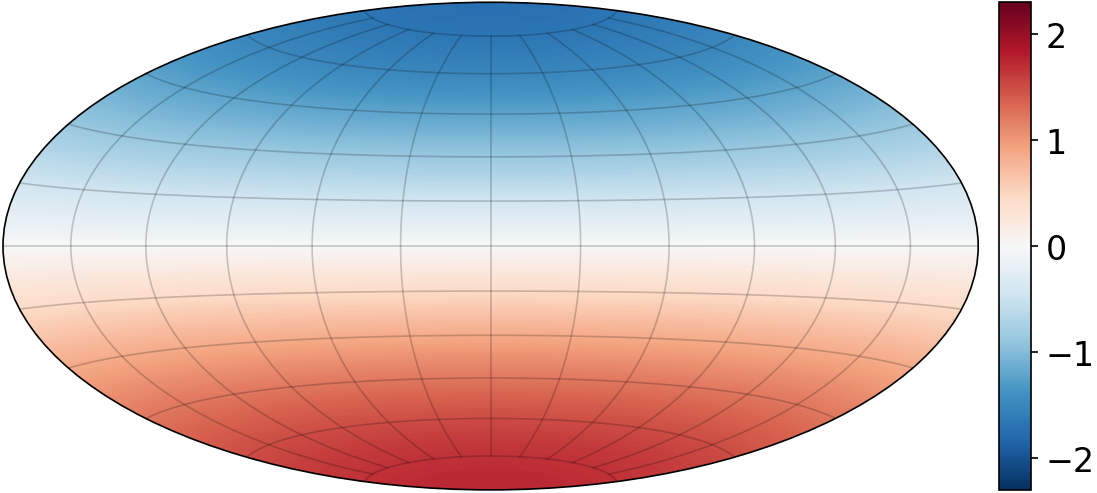}
   \end{subfigure}
   \\
   \begin{subfigure}{0.49\textwidth}
      \caption*{$\Delta_n P$ at $t=10$} 
      \includegraphics[width=\textwidth]{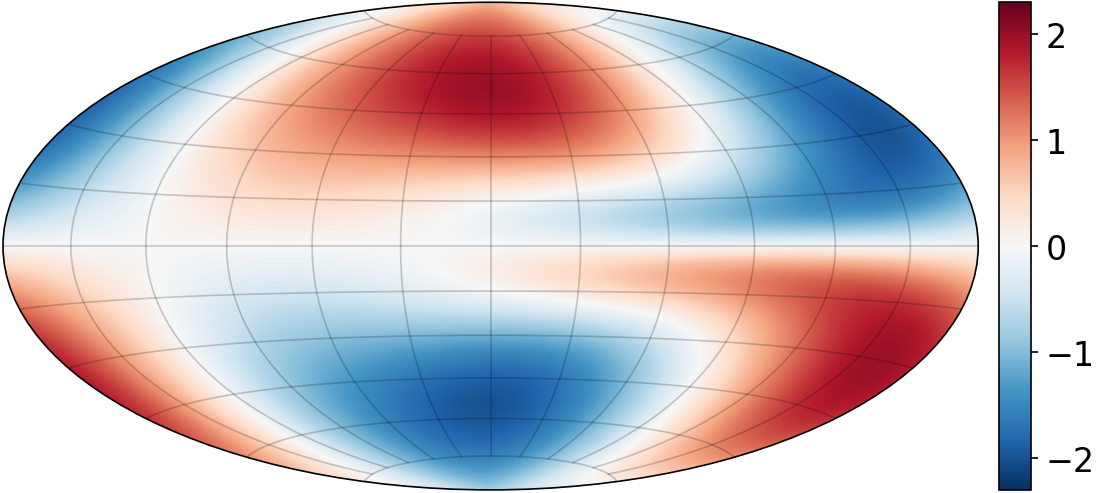}
   \end{subfigure}
   \begin{subfigure}{0.49\textwidth}
      \caption*{$\sigmamat$ at $t=10$} 
      \includegraphics[width=\textwidth]{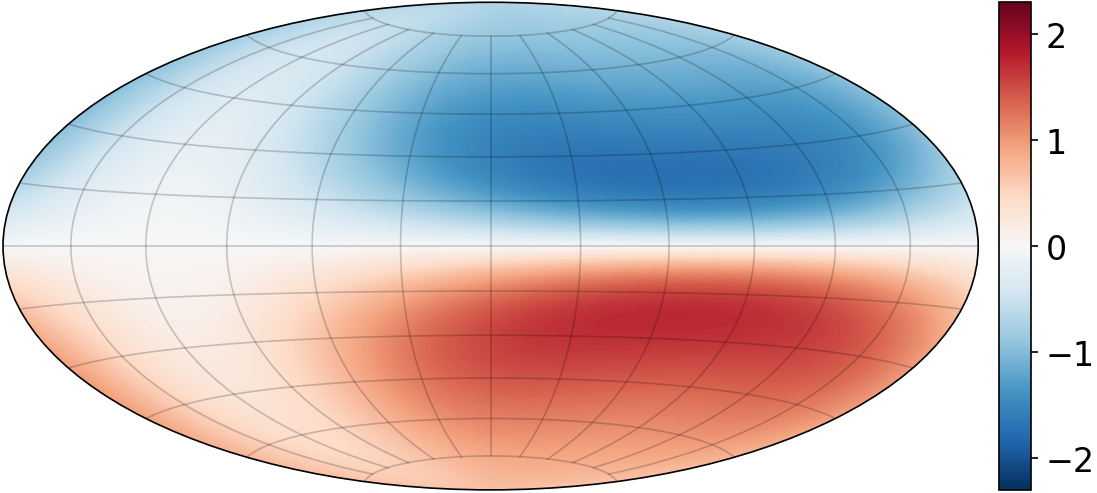}
   \end{subfigure}
   \\
   \begin{subfigure}{0.49\textwidth}
      \caption*{$\Delta_n P$ at $t=20$} 
      \includegraphics[width=\textwidth]{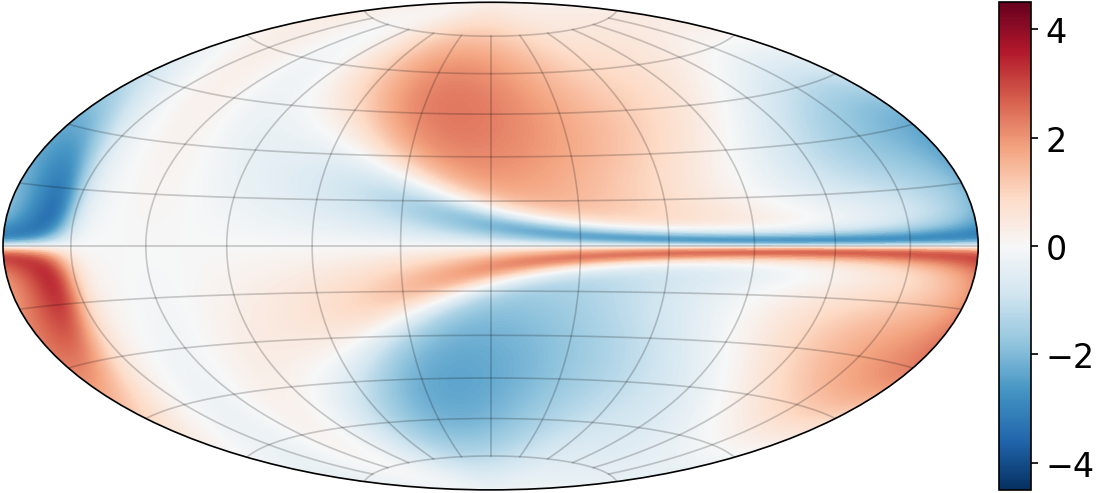}
   \end{subfigure}
   \begin{subfigure}{0.49\textwidth}
      \caption*{$\sigmamat$ at $t=20$} 
      \includegraphics[width=\textwidth]{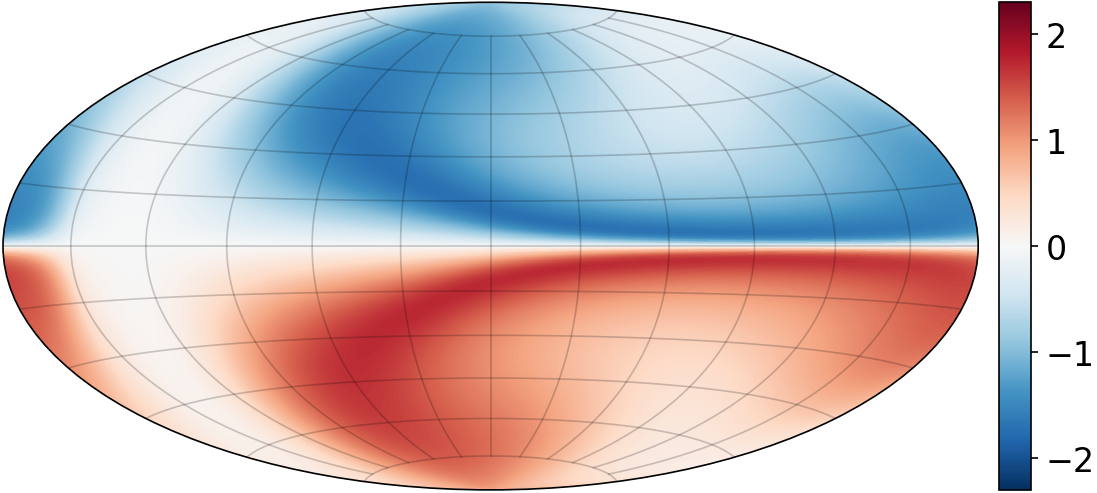}
   \end{subfigure}
   \\
   \hspace{0.1em}
   \begin{subfigure}{0.49\textwidth}
      \caption*{$\Delta_n P$ at $t=30$} 
      \includegraphics[width=\textwidth]{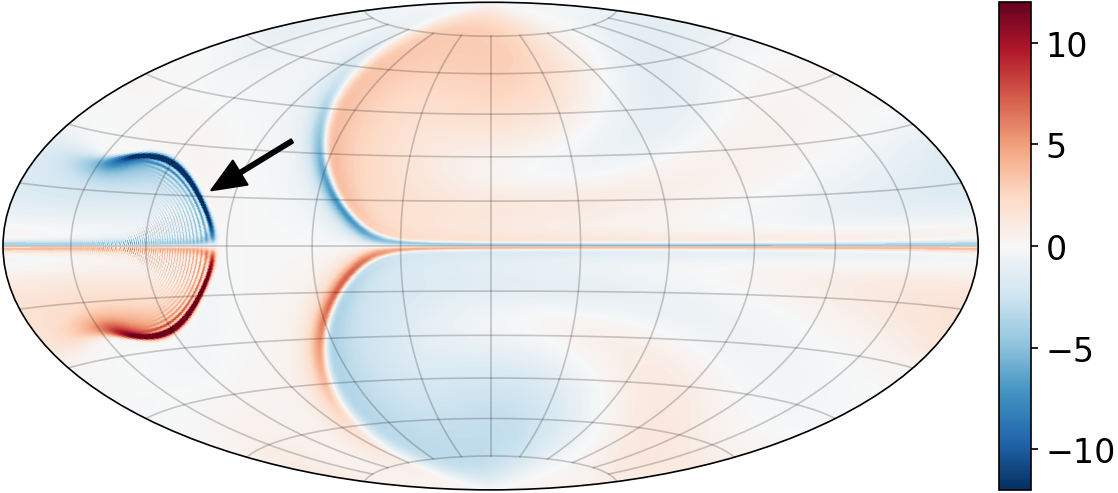}
   \end{subfigure}
   \hspace{-0.6em}
   \begin{subfigure}{0.49\textwidth}
      \caption*{$\sigmamat$ at $t=30$} 
      \includegraphics[width=\textwidth]{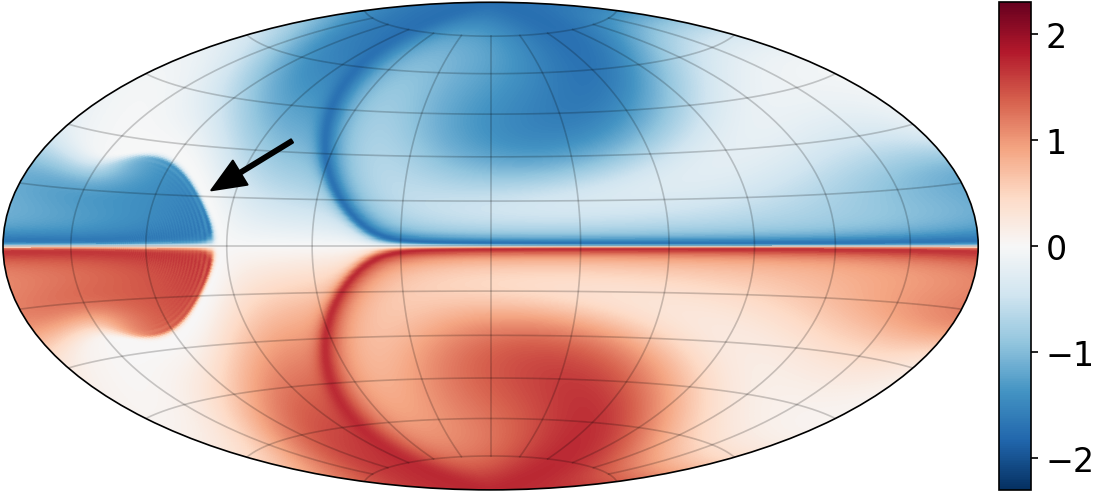}
   \end{subfigure}
   \caption{
      First simulation with $n=1024$.
      Visualization of the evolution of $\Delta_nP(t)$ and $\sigmamat(t)$.
      Notice the formation of a shock-wave in $\Delta_nP$ and a corresponding sharp gradient of $\sigmamat$, which implies rapid growth of the supremum norm of the vorticity vector $\omega$.
   }
   \label{fig:blowup-evolution}
\end{figure}

\begin{figure}
   \centering
   \begin{subfigure}{0.8\textwidth}
      \caption*{Evolution of the vorticity supremum norm for different $n$}\label{subfig:evolution-Linf} 
      \includegraphics[width=\textwidth]{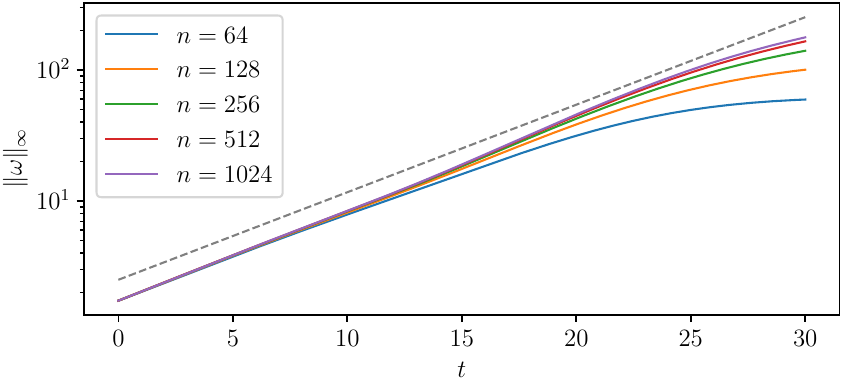}
   \end{subfigure}
   \caption{
      First simulation. The supremum norm of the 3-D vorticity $\omega$ for different choices of $n$.
      While the exact dynamics is accurately resolved, the growth is somewhat faster than exponential (the dotted line).
      However, eventually the curve flattens out when the model at level $n$ ceases to resolve the sharpness of the shock wave seen in Figure~\ref{fig:blowup-evolution}.
      Indeed, this flattening eventually occurs for any $n$, as all norms are equivalent in finite dimension and the energy norm is bounded.
      The time when this flattening begins is thus an indication that the sharpness of the shock wave is no longer accurately resolved.
   }
   \label{fig:blowup-Linf-norm}  
\end{figure}

\subsection{Second simulation: smooth, random data}

Here, the initial data are of the form
\begin{equation*}
   \Delta\psi\big|_{t=0} = \sum_{\ell=0}^{10} \sum_{m=-\ell}^\ell a_{\ell,m} Y_{\ell,m}, \qquad \sigma\big|_{t=0} = \sum_{\ell=0}^{10} \sum_{m=-\ell}^\ell b_{\ell,m} Y_{\ell,m},
\end{equation*}
where the coefficients $a_{\ell,m}$ and $b_{\ell,m}$ are drawn as independent samples from the standard Gaussian distribution.
This setup represents generic, smooth initial configurations.

Visualizations of $\Delta_n P$ and $\sigmamat$ at various output times are given in Figure~\ref{fig:random-evolution} for $n=1024$.
For 2-D Euler on $S^2$, generic initial conditions give rise interacting coherent blob structures~\cite{MoVi2020,MoVi2024}.
For the axisymmetric 3-D Euler on $S^3$ the situation is different.
Indeed, all the large scale structure of $\Delta_n P$ and $B$ disperse into higher frequency components, as captured in Figure~\ref{fig:random-evolution} at $t=20$.
Eventually, depending on $n$, the dispersion cannot continue further, due to the finite dimensionality of the model, so the sup-norm of $\omega$ flattens out, as seen in Figure~\ref{fig:random-Linf-norm}.
Initially it grows exponentially or faster.

\begin{figure}
   \centering
   \begin{subfigure}{0.49\textwidth}
      \caption*{$\Delta_n P$ at $t=0$} 
      \includegraphics[width=\textwidth]{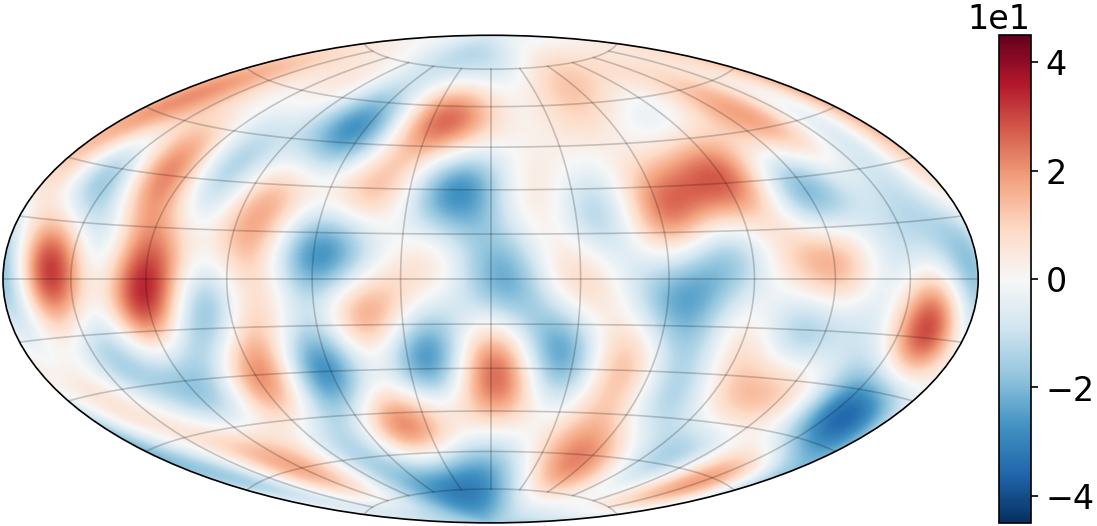}
   \end{subfigure}
   \begin{subfigure}{0.49\textwidth}
      \caption*{$\sigmamat$ at $t=0$} 
      \includegraphics[width=\textwidth]{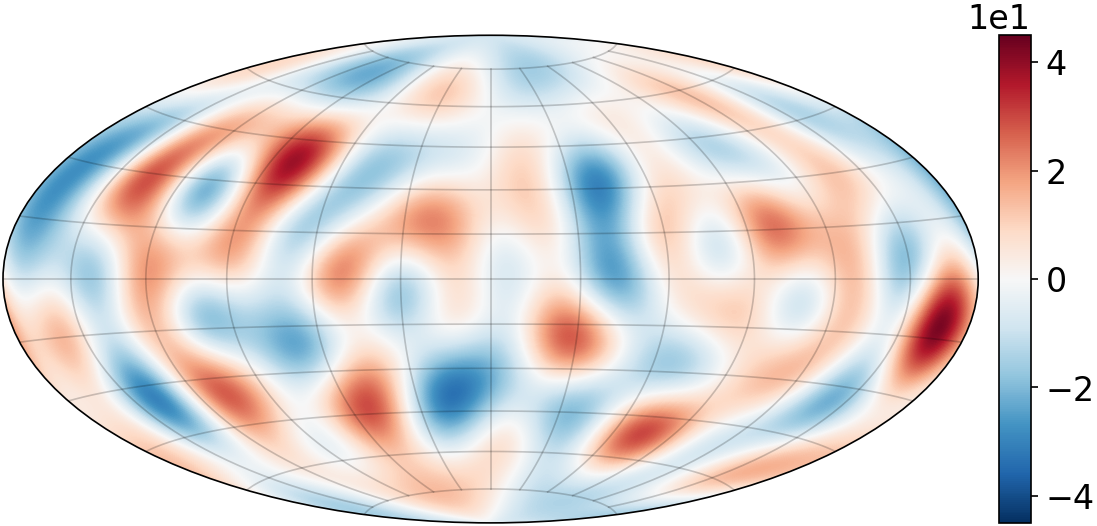}
   \end{subfigure}
   \\
   \begin{subfigure}{0.49\textwidth}
      \caption*{$\Delta_n P$ at $t=1$} 
      \includegraphics[width=\textwidth]{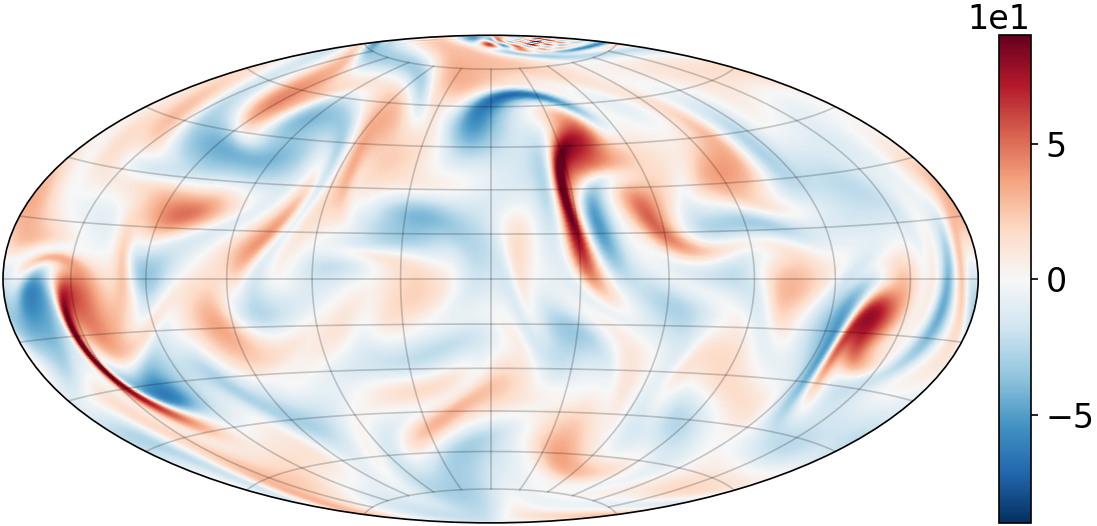}
   \end{subfigure}
   \begin{subfigure}{0.49\textwidth}
      \caption*{$\sigmamat$ at $t=1$} 
      \includegraphics[width=\textwidth]{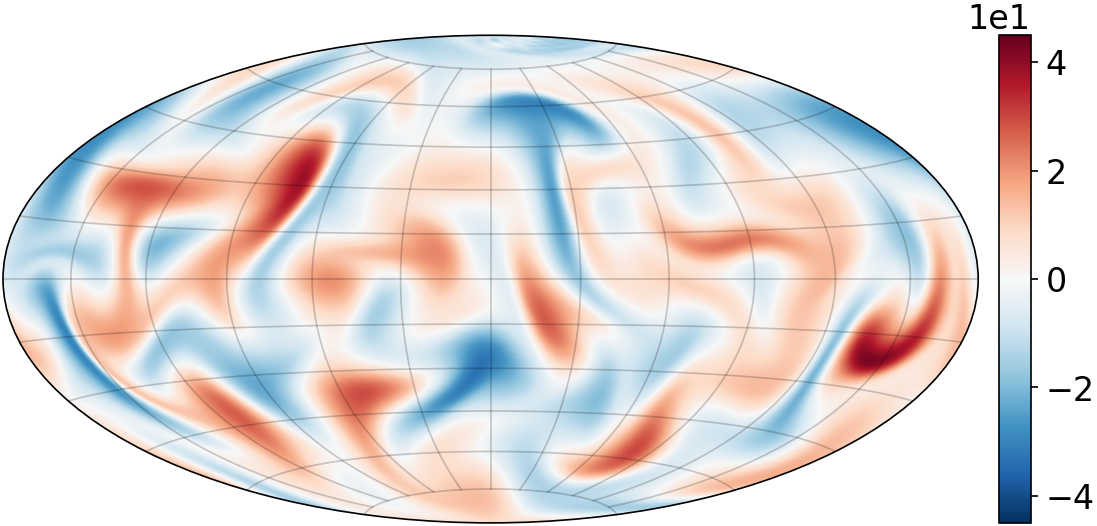}
   \end{subfigure}
   \\
   \begin{subfigure}{0.49\textwidth}
      \caption*{$\Delta_n P$ at $t=5$} 
      \includegraphics[width=\textwidth]{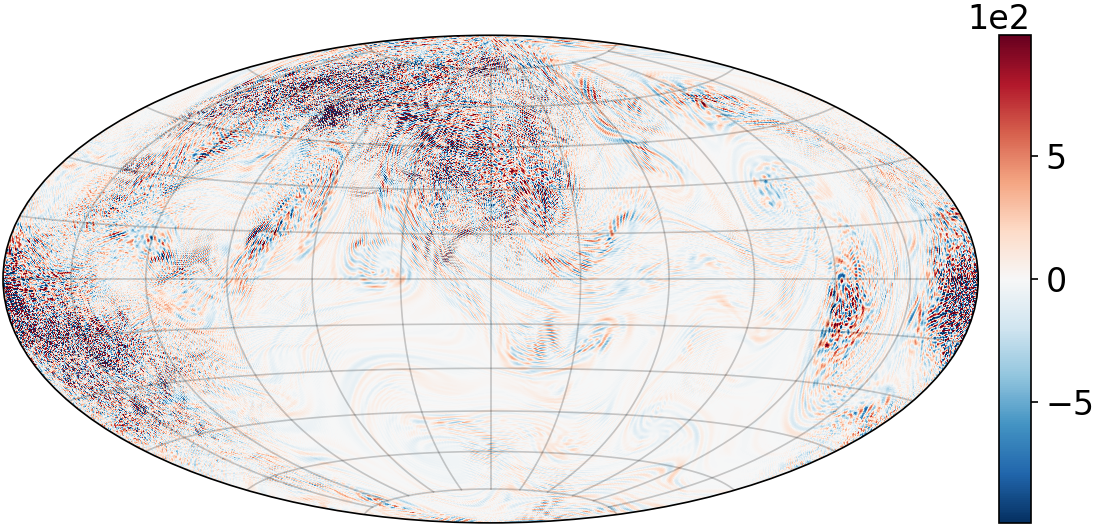}
   \end{subfigure}
   \hspace{-0.6em}
   \begin{subfigure}{0.49\textwidth}
      \caption*{$\sigmamat$ at $t=5$} 
      \includegraphics[width=\textwidth]{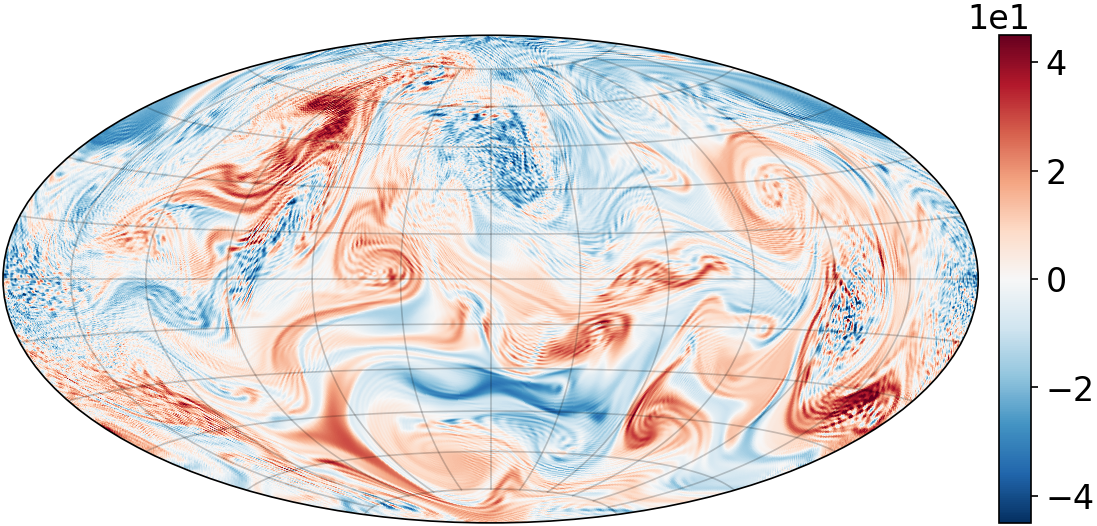}
   \end{subfigure}
   \\
   \begin{subfigure}{0.49\textwidth}
      \caption*{$\Delta_n P$ at $t=20$} 
      \includegraphics[width=\textwidth]{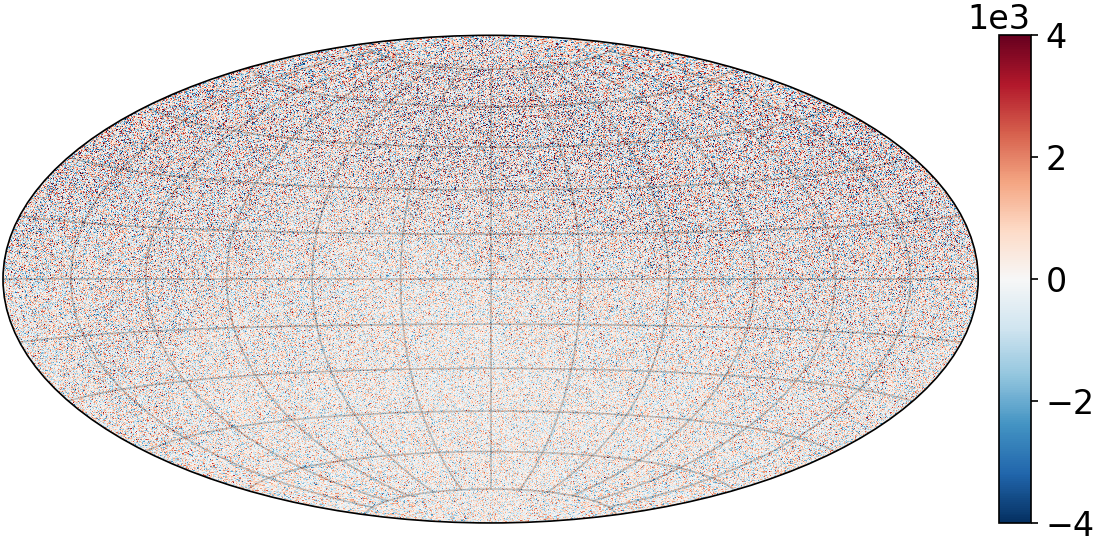}
   \end{subfigure}
   \begin{subfigure}{0.49\textwidth}
      \caption*{$\sigmamat$ at $t=20$} 
      \includegraphics[width=\textwidth]{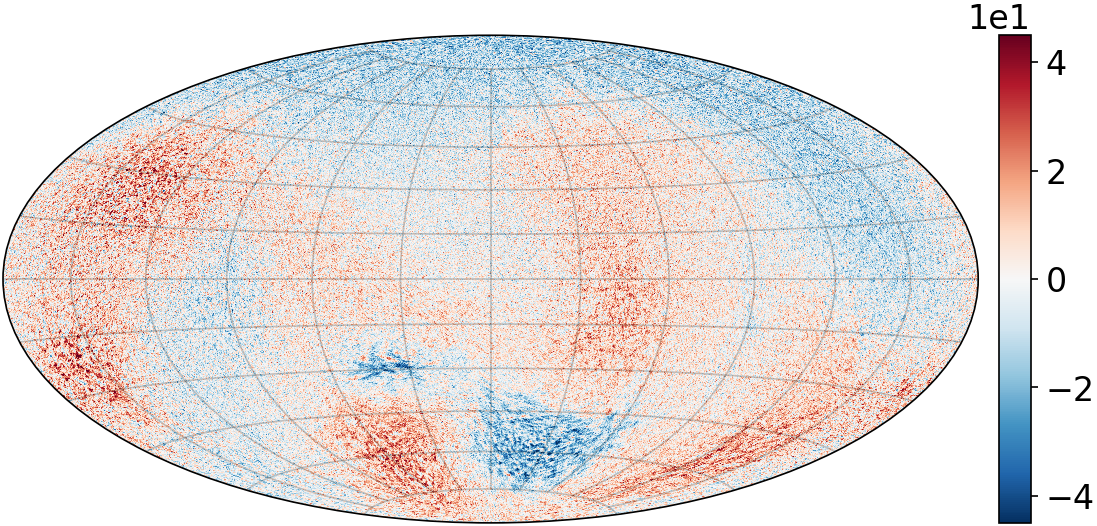}
   \end{subfigure}
   \caption{Second simulation with $n=1024$.
      Visualization of the evolution of $\Delta_nP(t)$ and $\sigmamat(t)$.
      Contrary to the 2-D Euler equations, there is no inverse energy cascade.
      In particular, the large scale structure of $\Delta_n P$ disperse into small scales.
   }
   \label{fig:random-evolution}
\end{figure}

\begin{figure}
   \centering
   \begin{subfigure}{0.8\textwidth}
      \caption*{Evolution of the vorticity supremum norm for different $n$}
      \includegraphics[width=\textwidth]{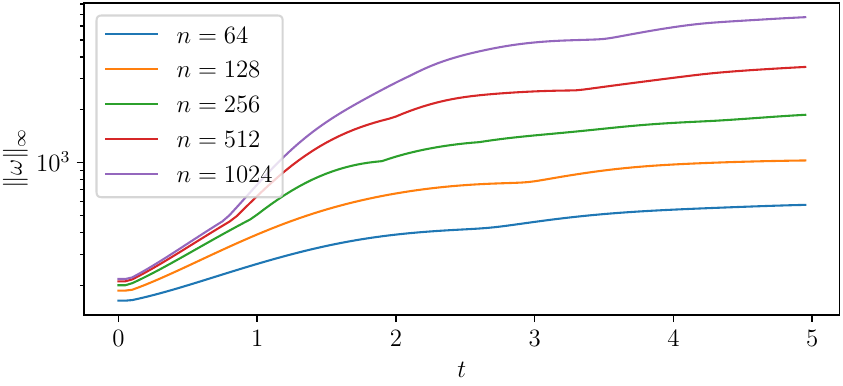}
   \end{subfigure}
   \caption{
      Second simulation.
      The supremum norm of the 3-D vorticity $\omega$ for different choices of $n$.
      Initially it grows exponentially, or faster. 
      But eventually, due to the finite $n$, it flattens out.
   }
   \label{fig:random-Linf-norm}  
\end{figure}

\new{
\section{Outlook}

The numerical experiments in section \ref{sec:numerics} showcase how the extended Zeitlin model developed in this paper can be used to yield insights and suggestions on qualitative behavior, by varying the matrix size $n$ and observe the generic trend. 
For example, Figures~\ref{fig:blowup-Linf-norm} and \ref{fig:random-Linf-norm} suggests that the $L^\infty$-norm of the vorticity grows exponentially, or slightly faster, in generic solutions.
This might give insights on whether the theoretical bound of double exponential growth is attainable on a compact domain without boundary.
(On a disk, it is attainable \cite{KiSv2014}.)
Of course, the numerical simulations do not bring us closer to proving such a thesis, but they do guide our intuition, and they could point out which directions are worthwhile to pursue.
More detailed and systematic numerical experiments, for example to find possible blow-up mechanisms, remain to be done.

It is also natural to further investigate the geometric properties of the model.
For example, how does the curvature change as $n$ increases? 
Is it always more negative than the two-dimensional case? 
Can one see lack of Fredholmness (\emph{cf.}~\cite{LiMiPr2022}) gradually appearing as $n$ goes to infinity? 
Are curvature bounds visible in the growth of vorticity, as they should be (since vorticity is a Jacobi field)?
These are deep geometrical questions that shed light on the qualitative dynamics and might be approachable in finite dimensions.

In summary, our general aim is that the extended Zeitlin model presented here should provide a useful tool to guide our understanding of the 3-D Euler equations.

}

\appendix

\section{Curvature and exact solutions when \texorpdfstring{$n=2$}{n = 2}}

\subsection{Ricci curvature}

Here we will compute the sectional curvature and the Ricci curvature for the  Zeitlin model on $\mathfrak{su}(n)\times \mathfrak{u}(n)$.
Already in the simplest possible case where $n=2$, this is surprisingly nontrivial.
For Zeitlin's model on $S^2$, the metric~\eqref{zeitlin2dmetric} then reduces to a multiple of the bi-invariant metric on $\mathfrak{su}(2)$, and the sectional curvature ends up being a positive constant 
(corresponding to the well-known identification between $SU(2)$ and the round $3$-sphere). 
However, even though our metric \eqref{zeitlinprodmet} on $\mathfrak{su}(2)\times \mathfrak{u}(2)$ restricts to multiples of the bi-invariant metric on each factor, the curvature  takes on both signs due to the nontrivial twisting involved in the product structure given by Theorem \ref{productzeitlinthm}.

\begin{theorem}\label{riccisu2thm}
If $Z = (X,cI+Y)\in \mathfrak{su}(2)\times \mathfrak{u}(2)$ for $X,Y\in \mathfrak{su}(2)$ and $c\in \mathbb{R}$, then the Ricci curvature for the metric \eqref{zeitlinprodmet} is
\begin{equation}\label{riccisu2}
\Ric(Z,Z) = \frac{1}{4}\, \lvert Y+2X\rvert^2 - \frac{11}{8} \, \lvert X\rvert^2.
\end{equation}
In particular the Ricci curvature takes on both signs. 
\end{theorem}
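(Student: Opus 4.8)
The plan is to remove the flat directions, reduce to a six-dimensional computation, trivialise $\mathfrak{su}(2)$ to $(\mathbb{R}^3,\times)$, and then apply Arnold's formula \eqref{curvature} term by term. First I would observe that the centre of $\mathfrak{u}(2)$ is a one-dimensional subspace on which the action $\rho$ and the cocycle $b$ of Theorem~\ref{productzeitlinthm} both vanish (both take values in $\mathfrak{su}(2)\subset\mathfrak{u}(2)$), and which is orthogonal to everything for the metric \eqref{zeitlinprodmet}; writing $e_0$ for a unit vector in it, one has $\ad_{e_0}=0$, $\adstar_{e_0}=0$, and $\adstar_w e_0=0$ for every $w$. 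Hence by \eqref{curvature} every curvature term in which $e_0$ appears vanishes, so $\Ric(Z,Z)$ is independent of the central coordinate $c$ and equals the Ricci curvature of the component $(X,Y)$ in the six-dimensional twisted product $\mathfrak{su}(2)\times\mathfrak{su}(2)$. From now on I take $c=0$ and $Z=(X,Y)$.

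On that six-dimensional Lie algebra I would use the basis $S_1,S_2,S_3$ of Section~\ref{zeitlinsection}, for which $[S_\alpha,S_\beta]=\sum_\gamma\epsilon_{\alpha\beta\gamma}S_\gamma$, $-2\operatorname{tr}(S_\alpha S_\beta)=\delta_{\alpha\beta}$, and $\Delta_n=-2\,\Id$. Identifying $\mathfrak{su}(2)\simeq(\mathbb{R}^3,\times)$ with the dot product $\langle\cdot,\cdot\rangle_0:=-2\operatorname{tr}(\cdot\,\cdot)$, the metric \eqref{zeitlinprodmet} becomes $\langle(P_1,\sigma_1),(P_2,\sigma_2)\rangle=\langle P_1,P_2\rangle_0+\tfrac12\langle\sigma_1,\sigma_2\rangle_0$, so that $\{(S_\alpha,0)\}\cup\{(0,\sqrt2\,S_\alpha)\}$ is an orthonormal basis. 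With $\kappa:=1/\hbarN$, the bracket \eqref{bracketvizman} with the choices of Theorem~\ref{productzeitlinthm} reads $\ad_{(P,\sigma)}(Q,\tau)=\kappa\big(Q\times P,\ \tau\times P+Q\times\sigma+P\times Q\big)$, and solving $\langle\adstar_uv,w\rangle=\langle v,\ad_uw\rangle$ against the weighted metric gives $\adstar_{(P,\sigma)}(Q,\tau)=\kappa\big(P\times Q+\tfrac12\sigma\times\tau+\tfrac12\tau\times P,\ P\times\tau\big)$; in particular $\adstar_{(P,\sigma)}(P,\sigma)=\kappa\big(\tfrac12\,\sigma\times P,\ P\times\sigma\big)$, which one checks recovers \eqref{zeitlinproduct} and fixes all signs.

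Then I would substitute these expressions into Arnold's formula \eqref{curvature} for $u=(P,\sigma)$, $v=(Q,\tau)$; repeated use of the double cross-product identity $A\times(B\times C)=\langle A,C\rangle_0\,B-\langle A,B\rangle_0\,C$ turns every term into a polynomial in the pairwise dot products of $P,\sigma,Q,\tau$, with an overall factor $\kappa^2$. Summing $\Ric(Z,Z)=\sum_i\langle R(Z,e_i)e_i,Z\rangle$ over the six orthonormal basis vectors (the seventh, $e_0$, contributing $0$), the sums collapse via $\sum_\alpha S_\alpha\times(S_\alpha\times A)=-2A$, $\sum_\alpha (A\times S_\alpha)\cdot(B\times S_\alpha)=2\langle A,B\rangle_0$, and $\sum_\alpha\langle S_\alpha,A\rangle_0\langle S_\alpha,B\rangle_0=\langle A,B\rangle_0$ (and the analogues for the $\sqrt2\,S_\alpha$ half of the basis), leaving a quadratic form in $(X,Y)$ which I expect to organise into $\tfrac14\,\lvert Y+2X\rvert^2-\tfrac{11}{8}\,\lvert X\rvert^2$. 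The sign claim is then immediate from this expression: $Z=(0,Y)$ gives $\Ric(Z,Z)=\tfrac14\lvert Y\rvert^2>0$, while $Z=(X,-2X)$ gives $\Ric(Z,Z)=-\tfrac{11}{8}\lvert X\rvert^2<0$.

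The hard part is purely the volume and asymmetry of the algebra. The last term $-\langle\adstar_uu,\adstar_vv\rangle$ in \eqref{curvature} does not telescope under the basis sum and must be expanded by hand, while the twist (the cocycle $b$) couples the two $\mathfrak{su}(2)$ factors unsymmetrically and the metric weights them differently ($1$ versus $\tfrac12$), so the mixed $2$-planes — one leg in each factor — have to be treated separately from the pure ones. It is worth flagging the conceptual point that emerges: although \eqref{zeitlinprodmet} restricts to a constant multiple of the bi-invariant (hence non-negatively curved) metric on each factor, the negative curvature is carried exactly by those mixed planes, and this is what forces $\Ric$ to be indefinite. (An alternative bookkeeping via Milnor's structure-constant formula for the Ricci curvature of a left-invariant metric leads to the same answer without shortening the work.)
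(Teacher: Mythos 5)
Your proposal follows essentially the same route as the paper's proof: discard the central direction $cI$, identify each $\mathfrak{su}(2)$ factor with $(\mathbb{R}^3,\times)$ so that $\Delta_2=-2\,\Id$, compute $\adstar$ against the weighted product metric (your formula agrees with the paper's \eqref{adstarsimplest}), and trace Arnold's formula \eqref{curvature} over a six-vector orthogonal basis using contractions such as $\sum_\alpha\lvert S_\alpha\times A\rvert^2=2\lvert A\rvert^2$. The only caveat is normalization: the paper's computation silently drops the $\frac{1}{\hbarN}$ scaling of the bracket, so the overall factor $\kappa^2$ you carry must be set to $1$ (and your norm convention matched to the paper's) for the resulting quadratic form to land exactly on \eqref{riccisu2}.
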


\begin{proof}
We first note that the Ricci curvature along the center $cI$ of $\mathfrak{u}(n)$ vanishes as it commutes with everything else.
We therefore can assume $c=0$. 

On $\mathfrak{su}(2)$, the Lie bracket in the basis $S_1,S_2,S_3$ can be identified with the usual cross product in $\mathbb{R}^3$, treating 
$(S_1,S_2,S_3$ as an oriented orthonormal basis. Hence we have $\ad_XY = -X\times Y$ for $X,Y\in \mathfrak{su}(2)$. 
Since $S_1$, $S_2$, and $S_3$ are all eigenvectors of the Hoppe-Yau Laplacian \eqref{hoppeyau} with eigenvalue $-2$, we can simply replace $\Delta_2 = -2I$, which simplifies
things greatly. 
In particular the metric \eqref{zeitlinprodmet} on the product $\mathfrak{su}(2)\times \mathfrak{su}(2)$ becomes 
$$ \big\langle (Y_1,X_1), (Y_2,X_2)\big\rangle = Q(Y_1,Y_2) + 2 Q(X_1,X_2),$$
in terms of the bi-invariant metric $Q(X_1,X_2) = - \operatorname{tr}(X_1X_2)$ on $\mathfrak{su}(2)$. 

We first compute $\adstar$. Recalling from \eqref{adjointproduct} that 
\begin{equation}\label{adjointsimplest} 
\ad_{(Y_1,X_1)}(Y_3,X_3) = \big( -X_1\times Y_3 - Y_1\times X_3 + X_1\times X_3, -X_1\times X_3\big),
\end{equation}
we find that 
\begin{align*}  
\big\langle \adstar_{(Y_1,X_1)}(Y_2,X_2), (Y_3,X_3)\big\rangle 
&= \big\langle (Y_2,X_2), \ad_{(Y_1,X_1)}(Y_3,X_3)\big\rangle \\
&= \big\langle (Y_2,X_2), ( -X_1\times Y_3 - Y_1\times X_3 + X_1\times X_3, -X_1\times X_3 )\big\rangle \\
&= \langle Y_2, -X_1\times Y_3 - Y_1\times X_3 + X_1\times X_3\rangle - 2\langle X_2, X_1\times X_3\rangle \\
&= \langle Y_3, X_1\times Y_2\rangle + \langle X_3, Y_1\times Y_2 - X_1\times Y_2 + 2 X_1\times X_2\rangle.
\end{align*}
Since this equation is valid for every $(Y_3,X_3)$, we have 
\begin{equation}\label{adstarsimplest}
\adstar_{(Y_1,X_1)}(Y_2,X_2) = \big( X_1\times Y_2, \tfrac{1}{2}(Y_1\times Y_2 - X_1\times Y_2) + X_1\times X_2\big).
\end{equation}

The terms in the curvature tensor \eqref{curvature} then take the form, with $V = (Y_2,X_2)$ and $U=(Y_1,X_1)$: 
\begin{align*}
\ad_UV &=  \big( -X_1\times Y_2 - Y_1\times X_2 + X_1\times X_2, -X_1\times X_2\big) \\
\adstar_UV+\ad_UV &= \big( X_1\times X_2 - Y_1\times X_2, \tfrac{1}{2}(Y_1\times Y_2-X_1\times Y_2)\big) \\
\adstar_UV + \ad_UV + \adstar_VU &= \big( X_1\times X_2-2Y_1\times X_2, \tfrac{1}{2}(Y_1\times X_2-X_1\times Y_2) - X_1\times X_2\big) \\
\adstar_UU &= (X_1\times Y_1, -\tfrac{1}{2} X_1\times Y_1), \qquad \adstar_VV = (X_2\times Y_2, -\tfrac{1}{2} X_2\times Y_2).
\end{align*}
Plugging in and simplifying, we obtain 
\begin{multline}\label{curvaturesimplest}
\langle R(U,V)V,U\rangle = 
\tfrac{1}{4} \big\lvert (X_1-2Y_1)\times X_2\big\rvert^2 + \tfrac{1}{8} \big\lvert Y_1\times X_2-X_1\times Y_2 - 2X_1\times X_2\big\rvert^2 \\
- \langle (X_1-Y_1)\times X_2, (X_1-Y_1)\times X_2 -X_1\times Y_2\rangle + \langle (Y_1-X_1)\times Y_2, X_1\times X_2\rangle \\
- \tfrac{3}{2} \langle  X_1\times Y_1, X_2\times Y_2\rangle.
\end{multline}

Now we consider an orthogonal basis for $\mathfrak{su}(2)\times \mathfrak{su}(2)$, given by 
$$ F_1 = (S_1,0), \quad F_2=(S_2,0), \quad F_3=(S_3,0), \quad F_4=(0,S_1), \quad F_5=(0,S_2), \quad F_6=(0,S_3).$$
Note that $\langle F_i,F_j\rangle = \delta_{ij}$ and $\langle F_{i+3},F_{j+3}\rangle = 2\delta_{ij}$ for $1\le i,j\le 3$. 
The formula \eqref{curvaturesimplest} simplifies in the case $U=F_i=(S_i,0)$ (with $X_1=S_i$ and $Y_1=0$) to 
\begin{equation}\label{Ficurvatures}
\begin{split}
\langle R(F_i,V)V,F_i\rangle &= 
\tfrac{1}{4} \lvert X_1\times X_2\rvert^2 + \tfrac{1}{8} \big\lvert X_1\times (Y_2+2X_2) \big\rvert^2 \\
&\qquad\qquad - \langle X_1\times X_2, X_1\times (X_2-Y_2)\rangle - \langle X_1\times Y_2, X_1\times X_2\rangle \\
&= -\tfrac{3}{4} \lvert S_i\times X_2\rvert^2 + \tfrac{1}{8} \big\lvert S_i\times (Y_2+2X_2) \big\rvert^2.
\end{split}
\end{equation}

Meanwhile if $U=F_{i+3}$ (with $X_1=0$ and $Y_1=S_i$), formula \eqref{curvaturesimplest} simplifies to
\begin{equation}\label{Fip3curvatures}
\begin{split}
\langle R(F_{i+3},V)V,F_{i+3}\rangle
&= 
\tfrac{1}{4} \big\lvert 2Y_1\times X_2\big\rvert^2 + \tfrac{1}{8}  \lvert Y_1\times X_2 \rvert^2 
- \langle Y_1\times X_2, Y_1\times X_2 \rangle  \\
&= \tfrac{1}{8} \lvert S_i\times X_2\rvert^2.
\end{split}
\end{equation}  

To get the Ricci curvature, we sum the expressions in \eqref{Ficurvatures}--\eqref{Fip3curvatures}
over $S_i$ for $1\le i\le 3$, taking half the sum of \eqref{Fip3curvatures} because $\langle F_{i+3},F_{j+3}\rangle = 2\delta_{ij}$ for $1\le i\le 3$. Thus we have that
$$
\Ric(Z,Z) = -\tfrac{3}{2} \lvert X\rvert^2 + \tfrac{1}{4} \lvert Y+2X\rvert^2 + \tfrac{1}{8} \lvert X\rvert^2.
$$ 
Here we used the formula 
$$ \sum_{i=1}^3 \lvert e_i\times X\rvert^2 = 2\lvert X\rvert^2$$
for the ordinary cross product in three dimensions, and replaced $(Y_2,X_2)$ with $(Y,X)$ to simplify notation. This reduces to \eqref{riccisu2}.
\end{proof}

As a consequence, we quickly find both signs of sectional curvature in the three-dimensional model, even in the simplest case. 
Meanwhile, in the 2-D Zeitlin model, small values of $n$ lead to strictly positive sectional curvature, and only higher values yield the negative curvature which is fairly common in $\Diffmu(S^2)$. 

In finite dimensions the Ricci curvature makes sense and often leads to much simpler formulas than the full Riemann curvature tensor, since it distills information into fewer dimensions. 
In infinite dimensions (on the full groups $\Diffmu(S^2)$ or $\Diffmu(S^3)$) the Ricci curvature doesn't make sense, except perhaps in an averaged sense (Lukatskii computed a version of Ricci curvature for $\Diffmu(\mathbb{T}^2)$ for example by taking averages of sectional curvatures in simple directions~\cite{lukatskii1984curvature}). 
It would be interesting to see, for each $n$, how much positive versus negative Ricci curvature we have, e.g., to find the index of the Ricci bilinear form in general. 
Here, when $n=2$, we have a $7$-dimensional configuration space, and we found that the index is $0$ (three positive eigenvalues of the Ricci tensor, three negative, and one zero in the $cI$ direction on $\mathfrak{u}(2)$). 
Is this also true for general $n$? 

\subsection{Exact solutions}

Using the formula for $\adstar$ in \eqref{adstarsimplest}, we can write down the Euler-Arnold equation on $\mathfrak{su}(2)\times \mathfrak{u}(2)$ and solve it explicitly.
Obviously, one should not expect such a solution formula for arbitrary $n$, but for $n=2$ there are a number of cancellations. 

\begin{theorem}\label{explicitsolution3d}
For $n=2$, with the Lie bracket identified with the cross product, the Euler equation \eqref{zeitlinproduct}
for $(P,\sigmamat)$ takes the form 
\begin{equation}\label{trivialequations}
\dot\sigmamat(t) + \frac{1}{\hbarN} [P, \sigmamat] = 0, \qquad P'(t) - \frac{1}{2\hbarN} [P,\sigmamat] = 0,
\end{equation}
and all solutions take the form 
$$ \sigmamat(t) = e^{-t \ad_L} \sigmamat(0), \qquad P(t) = e^{-2t \ad_L} P(0),$$
where $L = \frac{1}{\hbarN} P(0)+\frac{1}{2\hbarN} \sigmamat(0)$. 
\end{theorem}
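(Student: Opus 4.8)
The plan is to verify the two equations \eqref{trivialequations} and then integrate them by observing that a certain linear combination of $P$ and $\sigmamat$ is conserved. First I would specialize the Euler--Arnold equation \eqref{zeitlinproduct} to $n=2$: since $S_1,S_2,S_3$ are all eigenvectors of the Hoppe--Yau Laplacian with eigenvalue $-2$, we have $\Delta_2 = -2I$ on all of $\mathfrak{su}(2)$, so the first equation in \eqref{zeitlinproduct} reads $-2\dot P + \frac{1}{\hbarN}[P, -2P + \sigmamat] = 0$. The term $[P,-2P]$ vanishes, leaving $-2\dot P + \frac{1}{\hbarN}[P,\sigmamat] = 0$, i.e. $\dot P = \frac{1}{2\hbarN}[P,\sigmamat]$, which is the second equation in \eqref{trivialequations}. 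The second equation in \eqref{zeitlinproduct} is already $\dot\sigmamat + \frac{1}{\hbarN}[P,\sigmamat] = 0$, the first equation in \eqref{trivialequations}. (Here the center $cI$ of $\mathfrak{u}(2)$ plays no role since it commutes with everything, so we may freely write the bracket as the cross product on the $\mathfrak{su}(2)$ part.)

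Next I would find the conserved direction. Set $L(t) = \frac{1}{\hbarN}P(t) + \frac{1}{2\hbarN}\sigmamat(t)$. Differentiating and using \eqref{trivialequations},
\[
\dot L = \frac{1}{\hbarN}\dot P + \frac{1}{2\hbarN}\dot\sigmamat = \frac{1}{2\hbarN^2}[P,\sigmamat] - \frac{1}{2\hbarN^2}[P,\sigmamat] = 0,
\]
so $L(t) \equiv L(0) =: L$ is constant in time. Now rewrite the equations of motion in terms of $L$. Since $\sigmamat = 2\hbarN L - 2P$ we get $[P,\sigmamat] = [P, 2\hbarN L - 2P] = 2\hbarN[P,L]$, hence $\dot P = \frac{1}{2\hbarN}\cdot 2\hbarN[P,L] = [P,L] = -[L,P] = -\ad_L P$. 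Wait --- I should be careful with the sign convention; writing it as $\dot P = -(\ad_{2L})P$ will match the claimed exponent, so let me instead note $\dot P = [P,L]$, a linear constant-coefficient ODE. Similarly, $P = \hbarN L - \frac{1}{2}\sigmamat$ gives $[P,\sigmamat] = [\hbarN L, \sigmamat] = \hbarN[L,\sigmamat]$, so $\dot\sigmamat = -\frac{1}{\hbarN}\hbarN[L,\sigmamat] = -[L,\sigmamat] = -\ad_L\sigmamat$.

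Finally I would integrate these linear equations. The equation $\dot\sigmamat = -\ad_L\sigmamat$ has the unique solution $\sigmamat(t) = e^{-t\ad_L}\sigmamat(0)$. For $P$, rewriting $\dot P = [P,L] = -[L,P] = -\ad_L P$ --- hmm, this gives exponent $-t$, not $-2t$; I must reconcile this with the stated answer, which suggests the intended reading is $\dot P = -\ad_{2L}P$ up to the precise bracket normalization, or equivalently that one should double-check the coefficient $\tfrac12$ in $L$. Carrying the bookkeeping through carefully: from $\dot P = \frac{1}{2\hbarN}[P,\sigmamat]$ and $\sigmamat = 2\hbarN L - 2P$, one gets $\dot P = \frac{1}{2\hbarN}[P, 2\hbarN L] = [P,L]$; but the claimed solution $P(t) = e^{-2t\ad_L}P(0)$ corresponds to $\dot P = -2\ad_L P = 2[P,L]$, which is off by a factor of $2$ from what I derived --- so the main obstacle, and the step demanding real care, is getting these constants exactly right (the factor $\tfrac12$ from $\Delta_2 = -2I$ versus the factor in the definition of $L$). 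I would resolve this by rescaling $L$ or recomputing $[P,\sigmamat]$ once more, then conclude via uniqueness for linear ODEs that the displayed formulas are the general solution, noting that $\ad_L$ preserves both $\mathfrak{su}(2)$ and the center, so the flow is well-defined on $\mathfrak{su}(2)\times\mathfrak{u}(2)$.
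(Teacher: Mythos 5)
Your approach is the same as the paper's: specialize $\Delta_2=-2I$ to obtain \eqref{trivialequations}, observe that $P+\tfrac12\sigmamat$ is a constant of motion equal to $\hbarN L$, substitute back to get constant-coefficient linear ODEs, and integrate. Your bookkeeping is also correct, and the discrepancy you flagged at the end is not a mistake on your part: from $\sigmamat = 2\hbarN L - 2P$ one gets $[P,\sigmamat]=2\hbarN[P,L]=-2\hbarN[L,P]$, hence $\dot P = \tfrac{1}{2\hbarN}[P,\sigmamat] = -[L,P] = -\ad_L P$, so the solution is $P(t)=e^{-t\ad_L}P(0)$; the factor $2$ in the exponent of the stated formula is an error in the theorem (the paper's own proof derives the same identity $[P,\sigmamat]=-2\hbarN[L,P]$ but then writes $P'=2\ad_L P$, which neither follows from that identity nor integrates to $e^{-2t\ad_L}P(0)$). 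You can confirm that $e^{-t\ad_L}$ is right by checking directly that $P(t)=e^{-t\ad_L}P(0)$, $\sigmamat(t)=e^{-t\ad_L}\sigmamat(0)$ preserves $P+\tfrac12\sigmamat=\hbarN L$ (since $\ad_L L=0$) and hence satisfies both equations of \eqref{trivialequations}, whereas with the exponent $-2t$ on the $P$ factor the combination $P+\tfrac12\sigmamat$ would no longer be conserved. So do not ``rescale $L$'' or second-guess your computation of $[P,\sigmamat]$: commit to $\dot\sigmamat=-\ad_L\sigmamat$ and $\dot P=-\ad_L P$, and conclude by uniqueness for linear ODEs exactly as you planned.
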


\begin{proof}
The first equation is the same, and the second comes from the fact that on $\mathfrak{su}(2)$ we have $\Delta_2 = -2I$. 
Thus, for any solution, we must have that $P(t)+\tfrac{1}{2}\sigmamat(t)$ is constant. Call this constant matrix $\hbarN L$; then 
we have 
$$[P(t),\sigmamat(t)] = [\hbarN L-\tfrac{1}{2} \sigmamat(t), \sigmamat(t)] = \hbarN [L,\sigmamat(t)]$$
and similarly $[P(t),\sigmamat(t)] = -2\hbarN [L,P(t)]$. 
Equations \eqref{trivialequations} become
$$ \sigmamat'(t) = -\ad_L \sigmamat(t), \qquad P'(t) = 2 \ad_L P(t),$$
and the solution is immediate. 
\end{proof}

It would be interesting to see if these simple time-dependent solutions have analogues as exact, nonsteady solutions of the full axisymmetric Euler equations, along the lines of 2-D Rossby-Haurwitz waves~\cite{bennrossby}, which also survives in the Zeitlin model~\cite{Vi2020}.

\arxivonly{
\section{Explicit computation of the descending metric}\label{descendmetricappendix}

For reference, we give here explicit calculations of the expression of the descending metric \eqref{L2kineticS3}.
These calculations are adaptations of similar calculations in the paper \cite{LiMiPr2022}.

Choose Hopf-like coordinates $(r,\theta,\psi)$ for the $3$-sphere such that 
\begin{alignat*}{3}
    w&=\cos{\tfrac{r}{2}}\cos{\tfrac{\theta}{2}}, &\qquad\qquad x &= \cos{\tfrac{r}{2}}\sin{\tfrac{\theta}{2}}, \\
    y&=\sin{\tfrac{r}{2}} \cos{(\psi+\tfrac{\theta}{2})}, & 
    z&= \sin{\tfrac{r}{2}} \sin{(\psi+\tfrac{\theta}{2})}.
\end{alignat*}
Here the acceptable domain is $0<r<\pi$, $0<\theta<4\pi$, and $0<\psi<2\pi$ in order to capture almost all of the $3$-sphere.
Choose standard spherical coordinates on $S^2$ with 
$$(t,u,v) = (\sin{\rho}\cos{\phi}, \sin{\rho}\sin{\phi}, \cos{\rho}).$$
Then one can easily compute that the projection map $\Pi$ from \eqref{hopffibration} is given in these coordinates by 
$$ \rho = r, \qquad \phi = \psi.$$
Note that the usual spherical coordinate domain of $\rho$ and $\psi$ is completely covered by our domain for the $3$-sphere coordinates, with $\theta$ ranging freely over $[0,4\pi]$ corresponding to each level curve of $E_1$ having length $4\pi$.

In these Hopf-like coordinates on $S^3$, our vector fields $E_1$, $E_2$, and $E_3$ defined above take the explicit form 
\begin{align*}
    E_1 &= \partial_{\theta}\\
    E_2 &= 
    \cos{(\psi+\theta)} \partial_r - \sin{(\psi+\theta)} (\csc{r}\,\partial_{\psi}-\tan{\tfrac{r}{2}} \, \partial_{\theta})  \\
    E_3 &= 
    \sin{(\psi+\theta)} \partial_r + \cos{(\psi+\theta)} (\csc{r}\,\partial_{\psi}-\tan{\tfrac{r}{2}} \, \partial_{\theta})  
\end{align*}
and the dual basis of $1$-forms is given by 
\begin{align*}
    \alpha^1 &= d\theta + 2\sin^2{\tfrac{r}{2}} \, d\psi \\
    \alpha^2 &= \cos{(\psi+\theta)} \, dr - \sin{r} \sin{(\psi+\theta)} \, d\psi \\
    \alpha^3 &= \sin{(\psi+\theta)} \, dr + \sin{r} \cos{(\psi+\theta)} \, d\psi.
    \end{align*}
These are all declared to be orthonormal, so the volume form on $S^3$ is 
$$ dV = \alpha^1 \wedge \alpha^2 \wedge \alpha^3 = \sin{r} \, dr\wedge d\theta\wedge d\psi.$$

Functions that are invariant under $E_1$ on $S^3$ take the form 
$$ \tilde{\sigma}(r,\theta,\psi) = \sigma(r,\psi).$$
So the integrals we are dealing with look like 
$$ \int_{S^3} \tilde{\sigma}^2 \, dV 
= \int_0^{\pi} \int_0^{2\pi} \int_0^{4\pi} \sigma(r,\psi)^2 \, \sin{r} \, d\theta\,d\psi\,dr = 4\pi \int_0^{\pi} \int_0^{2\pi} \sigma(r,\psi)^2 \, \sin{r} \, d\psi \, dr.$$
Thus, the correct multiplier is $4\pi$. 

To check the Laplacian formula, write $\tilde{\sigma}(r,\theta,\psi) = \sigma(r,\psi)=\sigma(\rho,\phi)$. We compute that 
\begin{align*}
    E_2(\tilde{\sigma})(r,\theta,\psi) &= 
        \cos{(\psi+\theta)} \sigma_{\rho}(r,\psi) - \frac{\sin{(\psi+\theta)}}{\sin{r}} \, \sigma_{\phi}(r,\psi) \\
    E_3(\tilde{\sigma})(r,\theta,\psi) &= 
            \sin{(\psi+\theta)} \sigma_{\rho}(r,\psi) + \frac{\cos{(\psi+\theta)}}{\sin{r}} \, \sigma_{\phi}(r,\psi).
\end{align*}
Then applying $E_2$ and $E_3$ again, we get 
\begin{align*}
    (E_2)^2(\tilde{\sigma})(r,\theta,\psi) 
    &= \cos^2{(\psi+\theta)}\,\sigma_{\rho\rho}(r,\theta+\psi)
    + \sin^2{(\psi+\theta)} \left( \cot{r} \, \sigma_{\rho}(r,\psi)
    + \csc^2{r} \, \sigma_{\phi\phi}(r,\psi) \right)\\
    &\qquad + \sin{(2\psi+2\theta)} \csc{r} \, \big( \cot{r} \,\sigma_{\rho}(r,\psi) - \sigma_{\rho\phi}(r,\psi)\big) \\
    (E_3)^2(\tilde{\sigma})(r,\theta,\psi) 
    &= \sin^2{(\psi+\theta)}\,\sigma_{\rho\rho}(r,\theta+\psi)
    + \cos^2{(\psi+\theta)} \left( \cot{r} \, \sigma_{\rho}(r,\psi)
    + \csc^2{r} \, \sigma_{\phi\phi}(r,\psi) \right)\\
    &\qquad -\sin{(2\psi+2\theta)} \csc{r} \, \big( \cot{r} \,\sigma_{\rho}(r,\psi) - \sigma_{\rho\phi}(r,\psi)\big) .
\end{align*}
Adding these together, we get 
\begin{align*}
    (E_2)^2(\tilde{\sigma})(r,\theta,\psi) + (E_3)^2(\tilde{\sigma})(r,\theta,\psi) &= \sigma_{\rho\rho}(r,\psi)
+ \cot{r} \, \sigma_{\rho}(r,\psi) + \csc^2{r} \,\sigma_{\phi\phi}(r,\psi)\\ &= \Delta \sigma(r,\psi).
\end{align*}
}


\bibliographystyle{plain}
\bibliography{zeitlin-axisym.bib}

 
 
 

\end{document}